 \newtheorem{thm}{Theorem}[section]
 \newtheorem{lemma}[thm]{Lemma}
 \newtheorem{obs}[thm]{Observation}
 \newtheorem{question}[thm]{Question}
 \theoremstyle{definition}
 \newtheorem*{definition}{Definition}
 \newtheorem*{basis}{Basis}
 \newtheorem*{inductionstep}{Induction step}
 \theoremstyle{definition}
 \newtheorem{stage-oe}{(\ref*{orieven}) Stage}
 \newtheorem{stage-oo}{(\ref*{oriodd}) Stage}
 \newtheorem{stage-ne}{(\ref*{nonorieven}) Stage}
 \newtheorem{stage-no}{(\ref*{nonoriodd}) Stage}
  \let\Si\Sigma
  \let\vP=P
  \let\vQ=Q
  \let\vX=X
  \let\vY=Y
  \long\def\ignore#1{}
  \def\iv{^{-1}}
  \let\eugen\gamma 
  \def\di(#1,#2){{\fam1 di}(#1#2)}
  \def\diq(#1,#2,#3){{\fam1 di}(#1#2,#3)}
 \let\Q=Q
 \def\nQ{\widetilde{Q}}
 \def\nT{\widetilde{T}}
 \def\sD{\mathcal{D}}
 \let\ab\allowbreak
 \def\pr{${}'$}
\begin{document}
  \title{Minimal quadrangulations of surfaces}
  \author{
  Wenzhong Liu\thanks{Department of Mathematics, Nanjing University of
  Aeronautics and Astronautics, Nanjing 210016, China. Email:
 \texttt{wzhliu7502@nuaa.edu.cn}.},\;
  M. N. Ellingham\thanks{
 Department of Mathematics, Vanderbilt University, Nashville, TN 37240, USA,
 Email: \texttt{mark.ellingham@vanderbilt.edu}.
 Partially supported by Simons Foundation award no.~429625.}\,
 and
  Dong Ye\thanks{Department of Mathematical Sciences, Middle Tennessee
 State University, Murfreesboro, TN 37132, USA,
 Email: \texttt{dong.ye@mtsu.edu}.
 Partially supported by Simons Foundation award no.~359516.}\;
 }
 \date{24 June 2021}
 \maketitle

 \vspace{-10pt}
 \begin{center}
 In memory of Nora Hartsfield
 \end{center}

 \begin{abstract}
 A
\emph{quadrangular} embedding of a graph in a surface $\Si$, also known
as a \emph{quadrangulation of $\Si$}, is a cellular embedding in which
every face is bounded by a $4$-cycle.  A quadrangulation of $\Si$ is
\emph{minimal} if there is no quadrangular embedding of a
(simple) graph of smaller order in $\Si$.
 In this paper we determine $n(\Si)$, the order of a minimal
quadrangulation of a surface $\Si$, for all surfaces, both orientable
and nonorientable.
 Letting $S_0$ denote the sphere and $N_2$ the Klein bottle, we prove
that  $n(S_0)=4, n(N_2)=6$, and $n(\Si)=\lceil (5+\sqrt{25-16
\chi(\Si)})/2\rceil$ for all other surfaces $\Si$, where $\chi(\Si)$ is
the Euler characteristic.
 Our proofs use a `diagonal technique', introduced by Hartsfield in
1994.  We explain the general features of this method.

   \medskip

 \noindent {\em Keywords:} surface, quadrangular embedding,
 minimal quadrangulation

 \end{abstract}

 \section{Introduction}

  All graphs considered in this paper are simple.
  Let $G$ be a graph with vertex set $V(G)$ and edge set $E(G)$. For
convenience, we use $E_{k}$ to denote a subset of $E(G)$ with exactly
$k$ edges.
 A \emph{surface} is a
connected compact 2-manifold without boundary.
 The orientable surface of genus $g$ is denoted $S_g$, and the
  nonorientable surface of genus $q$ is denoted $N_q$.
  The Euler characteristic of a surface $\Si$ is denoted $\chi(\Si)$,
 which is $2-2g$ for $S_g$, and $2-q$ for $N_q$.
 The \emph{Euler genus} of $\Si$ is defined as $\eugen(\Si) = 2
- \chi(\Si)$.

  An embedding of a graph in a surface $\Si$ is {\em cellular} if every face of the embedding is homeomorphic to an open disc.
 All embeddings considered in the paper  are  cellular.
 An embedding is  {\em quadrangular}, or  a {\em quadrangulation} of  $\Si$,
  if every face is bounded by a $4$-cycle.
 A face bounded by a $4$-cycle is a \emph{quadrangle}, or to
use a shorter word, a \emph{square}.
 A quadrangulation of  $\Si$ is \emph{minimal} if there is no quadrangular embedding of a
 graph of smaller order in $\Si$.
 Similarly,  a {\em triangulation} of  $\Si$ is an embedding of
 a graph in $\Si$ such that every face is bounded by a $3$-cycle.
  A triangulation of $\Si$ is \emph{minimal} if  there is no triangular embedding of a
 graph  with a smaller order in $\Si$.

 Thomassen \cite{TH, THn} showed that given a graph $G$ and an integer
$k$, it is NP-complete to determine whether $G$ has an embedding in a
surface of orientable (or nonorientable) genus at most $k$.
 In other words, determining the minimum genus of an embedding of $G$ is
difficult.
 A minimum genus embedding of a graph maximizes the number of faces over
all its embeddings, and hence often has many small faces.
 Triangular embeddings of a given $G$ are always minimum
genus embeddings.
 However, we can also consider triangular embeddings from the
perspective of surfaces.  Peschl (see \cite{JR}) asked how many vertices
a triangulation of a given surface $\Si$ must have.

 A triangular embedding of a complete graph $K_n$ in a given surface
$\Si$ is both a minimum genus embedding of $K_n$, and a minimum order
triangulation of $\Si$.  Such embeddings played a key role in the proof
of the Map Color Theorem (see \cite{RI}).  These embeddings were
generalized in two ways.  For some values of $n$, there is no triangular
embedding of $K_n$, so to determine the minimum genus of $K_n$,
embeddings were used where most, but not all, of the faces are
triangular (again see \cite{RI}).  For most surfaces $\Si$, there is no
complete graph with a triangular embedding in $\Si$, so to find minimal
triangulations of $\Si$ we must use graphs close to complete graphs.
Ringel \cite{RI55} did this for nonorientable surfaces, and Jungerman
and Ringel \cite{JR} for orientable surfaces.

 Quadrangular embeddings are also of interest.  For bipartite graphs,
quadrangular embeddings have minimum genus.  For non-bipartite graphs,
quadrangular embeddings have minimum genus over all embeddings with face
degrees $4$ or more, or with even face degrees.
 Ringel \cite{RI65a, RI65b} determined the minimum genus of complete
bipartite graphs, which used quadrangular embeddings in many cases;
Bouchet \cite{BO} provided a simpler proof.
 Quadrangular embeddings of nearly complete bipartite graphs and graphs
obtained from some graph operations were studied in \cite{CR, MPP, PI1,
PI2, WH70, WH}.
 Hartsfield and Ringel \cite{HR1, HR2}
 found quadrangular embeddings of the complete graph $K_n$ in
orientable surfaces for $n \equiv 5$ (mod $8$), and in nonorientable
surfaces for $n \equiv 1$ (mod $4$) and $n \ne 1,5$.
 They also found both orientable and nonorientable quadrangular
embeddings of the general octahedral graph
 $O_{2n}$, obtained by removing a perfect matching from $K_{2n}$.
 Using the `diagonal technique', which we
discuss in more detail below, Hartsfield \cite{HA} outlined a proof
that a complete multipartite graph $K_{n_1, n_2,  \dots, n_t}$ with an
even number of edges, other than $ K_5 $ and $K_{1, m, n}$, has a
quadrangular embedding in a nonorientable surface.  This includes
nonorientable quadrangular embeddings of $K_n$ when $n \equiv 0$ (mod
$4$).
 Korzhik and Voss \cite{KO, KV} constructed exponentially many
nonisomorphic  quadrangular embeddings of  the complete graph
$K_{8s+5}$.

 Recently, the authors and others \cite{LEYZ} determined the minimum
genus of an embedding of $K_n$ with even face degrees.
 (Lawrencenko, Chen and Yang, a
subset of the authors of \cite{LEYZ}, also have alternative current
graph proofs \cite{LCY18} of some of these results, although some
modification of the index $2$ current graphs is required.)
 This completed the proof of the Even Map Color Theorem, a strengthening
of the Map Color Theorem for embeddings with even face degrees, and
included a complete characterization of when $K_n$ has a quadrangular
embedding.

 \begin{thm} [\cite{ HR1, HR2, LEYZ}]  \label{complete}
  The complete graph $K_n$ has a quadrangular embedding in an orientable
 surface if and only if $n \equiv 0$ or $5$ (mod $8$), and in a
 nonorientable surface if and only if $n \equiv 0$ or $1 (mod~4)$ and $n
 \ne 1, 5$.
  \end{thm}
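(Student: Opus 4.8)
The plan is to split the statement into its necessity (``only if'') direction, which I expect to reduce to an Euler-characteristic computation plus one genuinely exceptional value of $n$, and its sufficiency (``if'') direction, which amounts to assembling the explicit constructions already supplied by the sources cited in the statement.

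For necessity I would begin from the fact that in any cellular embedding each edge lies on exactly two face-boundary walks, so a quadrangulation with $F$ faces satisfies $4F = 2E$, hence $F = E/2$. For $K_n$, where $E = \binom{n}{2} = n(n-1)/2$, this forces $n(n-1)/2$ to be even, that is $4 \mid n(n-1)$, which holds precisely when $n \equiv 0$ or $1 \pmod 4$; this is the necessary condition common to both surface types. Substituting $F = E/2$ into Euler's formula gives $\chi(\Si) = n - \tfrac12\binom{n}{2} = n(5-n)/4$. Since an orientable surface has even Euler characteristic, an orientable quadrangulation of $K_n$ additionally requires $n(5-n)/4$ to be even, and a residue check modulo $8$ shows this happens exactly for $n \equiv 0$ or $5 \pmod 8$, which is the orientable necessity.

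The one case not settled by this counting is $n = 5$ in the nonorientable direction (the value $n = 1$ being vacuous, as $K_1$ has no faces). Here $\chi = 0$, so the only candidate nonorientable surface is the Klein bottle $N_2$, and I must rule out a quadrangulation of $K_5$ there even though $K_5$ does quadrangulate the torus. This is the main obstacle on the necessity side: I would argue by a direct structural analysis of a hypothetical quadrangulation, tracking how the five squares of the $4$-regular graph $K_5$ constrain the local rotations and force their diagonals, and deriving a contradiction with the nonorientability of $N_2$. The conclusion is consistent with the value $n(N_2) = 6$ announced in the abstract, which is exactly where the Klein bottle departs from the general formula.

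For sufficiency I would verify each surviving residue class by exhibiting an embedding, drawing directly on the cited work: orientable embeddings for $n \equiv 5 \pmod 8$ from \cite{HR1}; nonorientable embeddings for $n \equiv 1 \pmod 4$ with $n \ne 1,5$ from \cite{HR2}; nonorientable embeddings for $n \equiv 0 \pmod 4$ from the complete-multipartite construction of \cite{HA} via the diagonal technique; and orientable embeddings for $n \equiv 0 \pmod 8$ from the authors' work in \cite{LEYZ}, which provides the final, previously open family. Matching these four constructions against the four residue classes isolated above closes the equivalence. The genuinely hard content lies in those constructions (current-graph and diagonal-technique arguments), not in the counting that organizes them.
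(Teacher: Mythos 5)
Your Euler-characteristic computation is correct and is exactly the counting this paper itself uses (it is equation (\ref{quadeq}) specialized to $t=0$): $4F=2E$ forces $\binom{n}{2}$ even, hence $n\equiv 0,1 \pmod 4$, and $\chi=n(5-n)/4$ must be even in the orientable case, giving $n\equiv 0,5 \pmod 8$. But note that the paper contains no proof of Theorem \ref{complete} at all --- it is imported wholesale from \cite{HR1, HR2, LEYZ}, and indeed the paper leans on the cited theorem in the other direction: in Lemma \ref{nonorismall} the nonexistence for $(n,t)=(5,0)$ is justified by citing Theorem \ref{complete}, not proved. (The paper does remark that its own Theorem \ref{construction} with $t=0$ gives an independent diagonal-technique proof of the sufficiency direction, which your proposal does not attempt.)

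The genuine gap in your proposal is the case you yourself flag as the main obstacle: the nonexistence of a quadrangular embedding of $K_5$ in the Klein bottle $N_2$. You correctly isolate it (for $n=5$, $\chi=0$, so $N_2$ is the only nonorientable candidate, and $K_5$ does quadrangulate the torus, so no counting argument can succeed), but ``a direct structural analysis of the five squares \dots deriving a contradiction with nonorientability'' is a plan, not an argument. This is the one step on the necessity side that goes beyond arithmetic, and a complete proof must either carry out the finite case analysis or cite it (it is covered by \cite{HR2, LEYZ}); as written, your necessity direction is incomplete exactly where the theorem's exceptional value $n=5$ lives. A secondary, fixable issue is attribution on the sufficiency side: you source the nonorientable $n\equiv 0 \pmod 4$ case to \cite{HA}, but as this paper is at pains to explain, Hartsfield's paper does not contain rigorous proofs --- which is precisely why \cite{HA} is absent from the theorem's citation list; the rigorous reference for that class (and for orientable $n\equiv 0 \pmod 8$) is \cite{LEYZ}. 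With those two repairs --- cite or prove the $K_5$/$N_2$ exclusion, and route the $n\equiv 0\pmod 4$ nonorientable case through \cite{LEYZ} --- your decomposition matches the literature's and is sound.
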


  The quadrangular embeddings of the complete graphs $K_n$ and the
general octahedral graphs $O_{2n}$ given in \cite{HR1, HR2, LEYZ}  are
all minimal quadrangulations of surfaces.
Other prior results on minimal quadrangulations, for which we provide
details later in this section, appear in \cite{HR1, LA, LEYZ}.
 The purpose of this paper is to construct, and hence determine the
order of, minimal quadrangulations for all surfaces.
 Our main results are as
follows; Theorem \ref{construction} provides the embeddings needed to
prove Theorems \ref{order} and \ref{face-simple}.

 \begin{thm}  \label{construction}

 Let $(n,t)$ be a pair of integers with $n \ge 4$
and $0 \le t \le n-4$.

 If $t \equiv \frac12 n(n-5)$ (mod $4$)
 then there is an orientable
quadrangular embedding of an $n$-vertex graph with $\binom{n}{2}-t$
edges.  There is also a quadrangulation of $S_0$ for $(n,t)=(4,2)$.

 If $t \equiv \frac12 n(n-5)$
(mod $2$)
 then there is a nonorientable
quadrangular embedding of an $n$-vertex graph with $\binom{n}{2}-t$
edges, unless $(n,t) = (5,0)$, in which case no such embedding exists.
 There is also a quadrangulation of $N_2$ for $(n,t)=(6,3)$.

 \end{thm}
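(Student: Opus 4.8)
The plan is to read the pair $(n,t)$ as an index for a single target surface and then build the required embedding by surgery. First I would record the Euler-characteristic bookkeeping. A quadrangular embedding of a graph with $n$ vertices and $m=\binom{n}{2}-t$ edges has $m/2$ faces and lies on a surface with $\chi=n-m/2$, so $m$ must be even and, in the orientable case, $\chi$ must be even as well. A short computation gives $\binom{n}{2}-2n=\tfrac12 n(n-5)$, so the condition ``$m\equiv 0\pmod 2$'' becomes $t\equiv\tfrac12 n(n-5)\pmod 2$, and the condition ``$m\equiv 2n\pmod 4$'' (equivalently $\chi$ even) becomes $t\equiv\tfrac12 n(n-5)\pmod 4$. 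Thus the two congruences in the statement are exactly the necessary parity conditions, and each admissible $(n,t)$ names one specific surface. This also explains the out-of-range data points: $(4,2)$ gives $\chi=2$, the sphere $S_0$, and $(6,3)$ gives $\chi=0$ with nonorientability, the Klein bottle $N_2$, while $(5,0)$ would force a quadrangular embedding of $K_5$ in $N_2$, which Theorem \ref{complete} forbids.

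Next I would isolate the two local surgeries that move between admissible edge counts while keeping every face a quadrilateral; this is where the diagonal technique enters. The \emph{handle move} takes two square faces $a_1a_2a_3a_4$ and $b_1b_2b_3b_4$, deletes their interiors, glues in a tube, and adds the four ``diagonal'' edges $a_ib_i$, cutting the tube into the four squares $a_ia_{i+1}b_{i+1}b_i$; it leaves $n$ fixed, raises $m$ by $4$, lowers $\chi$ by $2$, and preserves orientability, so it changes $t$ by $-4$, matching the spacing of the orientable congruence. The \emph{crosscap move} takes a single square face, inserts a crosscap, and routes diagonal edges through it to reform quadrilaterals; it raises $m$ by $2$, lowers $\chi$ by $1$, and renders the surface nonorientable, changing $t$ by $-2$ and matching the nonorientable spacing. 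A single crosscap move therefore converts any orientable example into a nonorientable one on the same vertex set.

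With these moves in hand, the construction is an induction that, for each fixed $n$, sweeps $t$ downward from its largest admissible value toward $0$. For the base case (largest $t$, simplest surface) I would exhibit one explicit sparse quadrangulation on $n$ vertices; repeatedly applying the handle move then realizes every $t$ in the orientable class, while performing one crosscap move and continuing with crosscap moves realizes every $t$ in the nonorientable class. At the dense end, near $t=0$, I would anchor against the known quadrangular embeddings of $K_n$ and of nearly complete graphs from Theorem \ref{complete} and the octahedral and complete-multipartite embeddings of \cite{HR1,HR2,HA,LEYZ}, which already supply the smallest admissible values of $t$. The exceptional points are handled directly: $(4,2)$ by the $C_4$-quadrangulation of $S_0$, $(6,3)$ by an explicit six-vertex quadrangulation of $N_2$, and $(5,0)$ by the nonexistence noted above.

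The hard part is simplicity. Each handle move demands four independent non-edges $a_ib_i$ joining the corners of two squares, and each crosscap move demands the analogous non-adjacent corners, yet as $t$ decreases the graph approaches $K_n$ and such non-edges grow scarce. The real work, then, is to carry along at every stage an embedding whose face structure guarantees two square faces whose corners are non-adjacent in the required pattern, and to verify that the sparse bases and the dense complete-graph anchors actually meet across the full range $0\le t\le n-4$. Controlling this configuration \textbf{uniformly in $n$} --- rather than any single surgery --- is the crux of the argument, and it is what forces the careful choice of base embeddings and the explicit diagonal technique used to describe them.
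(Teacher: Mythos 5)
Your parity bookkeeping is exactly the paper's equation (\ref{quadeq}), and your two surgeries are its Operations 2 and 3, but the skeleton of your induction is the wrong way around, and the one thing you defer as ``the real work'' is the entire content of the paper's proof. For fixed $n$ the largest admissible $t$ is at most $n-4$, so your ``sparse base case'' is already $K_n$ minus at most $n-4$ edges --- a nearly complete graph whose quadrangular embedding is essentially as hard to exhibit as the theorem itself; there is no genuinely sparse end to start from. Sweeping $t$ downward then terminates at $K_n$, which you propose to anchor via Theorem \ref{complete}; but that handles only $t=0$ (and only for special residues of $n$), and you supply no mechanism for the two anchors to meet: handle additions only add edges, and deleting edges from a quadrangulation does not in general leave a quadrangulation, so interpolation is not automatic. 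The paper inverts the scheme and inducts on $n$ (in blocks of eight vertices for orientable, four for nonorientable, two at a time within a block): from a near-complete quadrangulation on $n$ vertices carrying a perfect or full \emph{diagonal set}, a disc addition inserts the two new vertices and successive Type I handle additions --- which join the new diagonal pair to an old diagonal pair by a $K_{2,2}$, four edges at a time, and crucially \emph{preserve} both diagonals --- sweep out every $t \in T_{n+2}$ as intermediate embeddings, with the terminal near-complete embedding becoming the base for the next step. The diagonal-set invariant (perfect, full, and nearly-perfect subsets, plus carefully reserved squares consumed by occasional Type II/III/IV additions) is precisely the device that guarantees, uniformly in $n$, the non-adjacent corner configurations you correctly identify as the crux but never construct. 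Note also that your described handle move (a matching $a_ib_i$ cutting the tube into squares $a_ia_{i+1}b_{i+1}b_i$) is the paper's Type IV, which destroys diagonals; it is used only sparingly, not as the workhorse.

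Your claim that ``a single crosscap move converts any orientable example into a nonorientable one'' also cannot cover the nonorientable case. A crosscap addition shifts $t$ by $-2$, so from orientable embeddings (existing only for $t \equiv \frac12 n(n-5) \pmod 4$) one crosscap reaches only the class $t \equiv \frac12 n(n-5)+2 \pmod 4$; the remaining nonorientable class would need two crosscaps from an orientable example with $t' = t+4$, which violates $t' \le n-4$ whenever $t > n-8$, leaving the sparse end of that class (e.g.\ $t = n-4$ or $n-5$, whichever has the right parity) unreachable. This is why the paper runs wholly separate nonorientable inductions (Lemmas \ref{nonorieven} and \ref{nonoriodd}), with their own invariants such as Property P, and obtains both residues modulo $4$ within the modulo-$2$ class by the trick of optionally \emph{omitting} a crosscap addition embedded mid-construction --- a degree of freedom your orientable-plus-crosscap pipeline does not have. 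In short: the necessity direction and the move catalogue are right, but the inductive architecture that makes the moves legal on a simple graph at every step is missing, and the proposed architecture (fix $n$, sweep $t$, anchor at $K_n$) would not close.
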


 \begin{thm}  \label{order}
  Let $\Si$ be a surface with Euler characteristic
$\chi(\Si)$ and Euler genus $\eugen(\Si)$.  Let $n(\Si)$ be the number of vertices of a
minimal quadrangulation of $\Si$.
 If $\Si \ne S_{0}$ and $N_{2}$, then
 \begin{align*}
 n(\Si) = \left\lceil \frac{5+\sqrt{25-16\chi(\Si)}}{2} \right\rceil
 = \left\lceil \frac{5+\sqrt{16\eugen(\Si)-7}}{2} \right\rceil.
 \end{align*}
 Moreover,  $n(S_{0})=4$ and $n(N_{2})=6$.
 \end{thm}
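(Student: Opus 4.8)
The plan is to establish matching lower and upper bounds on $n(\Si)$, deriving the lower bound from Euler's formula and the upper bound from Theorem~\ref{construction}. Suppose $G$ is a quadrangulation of $\Si$ with $n$ vertices, $m$ edges and $f$ faces. Since every face is a square, counting incidences between edges and faces gives $2m = 4f$, so $f = m/2$; substituting into Euler's formula $n - m + f = \chi(\Si)$ yields $m = 2(n - \chi(\Si))$. Because $G$ is simple, $m \le \binom{n}{2}$, and this rearranges to $n^2 - 5n + 4\chi(\Si) \ge 0$. For $\Si \ne S_0$ we have $\chi(\Si) \le 1$, so the larger root of the quadratic is at least $4$ and we conclude $n \ge \tfrac12\bigl(5 + \sqrt{25 - 16\chi(\Si)}\bigr)$, hence $n \ge n_0 := \lceil \tfrac12(5 + \sqrt{25 - 16\chi(\Si)}) \rceil \ge 4$.

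To realize this value I would set $t_0 = \binom{n_0}{2} - 2(n_0 - \chi(\Si)) = \tfrac12(n_0^2 - 5n_0 + 4\chi(\Si))$, the number of edges a quadrangulation on $n_0$ vertices must omit from $K_{n_0}$; this is exactly the parameter $t$ of Theorem~\ref{construction}, and I claim $(n_0, t_0)$ satisfies its hypotheses. We have $t_0 \ge 0$ since $n_0 \ge \tfrac12(5 + \sqrt{25 - 16\chi})$. For the other bound, $n_0 - 1 < \tfrac12(5 + \sqrt{25-16\chi})$ while $n_0 - 1 \ge 3$ exceeds the smaller root (which is at most $1$), so $n_0 - 1$ lies strictly between the two roots and the integer $(n_0-1)^2 - 5(n_0-1) + 4\chi$ is negative, i.e.\ at most $-1$; expanding gives $2t_0 \le 2n_0 - 7$, and integrality of $t_0$ forces $t_0 \le n_0 - 4$. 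The congruence condition is automatic because $t_0 - \tfrac12 n_0(n_0 - 5) = 2\chi(\Si)$, which is $\equiv 0 \pmod 4$ when $\chi(\Si)$ is even (the orientable case) and $\equiv 0 \pmod 2$ always (the nonorientable case). Applying the appropriate half of Theorem~\ref{construction} then yields a quadrangular embedding of the correct orientability type with $n_0$ vertices and Euler characteristic $\chi(\Si)$, which by the classification of surfaces is an embedding in $\Si$.

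I expect the delicate step to be the inequality $t_0 \le n_0 - 4$: the two thresholds $\tfrac12(5+\sqrt{25-16\chi})$ and $\tfrac12(7+\sqrt{17-16\chi})$ can lie within distance $1$, so the argument works only because $t_0$ is an integer. It remains to treat the two exceptional surfaces. For $S_0$, any $4$-cycle needs at least four vertices, and Theorem~\ref{construction} provides the quadrangulation with $(n,t) = (4,2)$, giving $n(S_0) = 4$. For $N_2$ the formula returns $n_0 = 5$ and $t_0 = 0$, which forces $G = K_5$; but $(5,0)$ is precisely the excluded case of Theorem~\ref{construction} (equivalently, by Theorem~\ref{complete}, $K_5$ has no nonorientable quadrangular embedding), so $n(N_2) \ge 6$, and the quadrangulation with $(n,t) = (6,3)$ from Theorem~\ref{construction} shows $n(N_2) = 6$.
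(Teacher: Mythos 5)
Your proposal is correct and follows essentially the same route as the paper: both derive $m = 2(n-\chi(\Si))$ from Euler's formula (the paper's equation~(\ref{quadeq})), locate the unique pair $(n,t)$ with $0 \le t \le n-4$ and the right congruence, invoke Theorem~\ref{construction} for existence, and treat $S_0$ and $N_2$ (via the excluded case $(5,0)$) separately. The only difference is presentational: you inline the content of the paper's Lemma~\ref{fprop} (your root-location plus integrality argument for $t_0 \le n_0-4$ is exactly its monotonicity argument in disguise) and prove the lower bound directly from $m \le \binom{n}{2}$ rather than citing Lemma~\ref{minquad}.
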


 An embedding is \emph{face-simple} if its dual is simple, i.e.,
two face boundaries share at most one edge.  We can strengthen Theorem
\ref{order} slightly in the orientable case.

 \begin{thm}\label{face-simple}
 Let $n'(\Si)$ be the minimum number of vertices of a face-simple
quadrangular embedding of a simple graph in $\Si$.  Then $n'(S_0) = 8$
and $n'(S_g) = n(S_g)$ for all $g \ge 1$.
 \end{thm}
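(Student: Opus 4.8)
The plan is to derive both equalities from the trivial bound $n'(\Si)\ge n(\Si)$ (every face-simple quadrangulation is a quadrangulation) together with a local analysis of how a quadrangulation can fail to be face-simple. Since the dual of a quadrangular embedding is $4$-regular, face-simplicity says exactly that no two faces share two or more edges. I would first record the following characterization. If two quadrangles share two edges, those edges are either adjacent or opposite. In the adjacent case the two faces use the same pair of edges at their common vertex $v$, which is possible only if $\deg v = 2$; the same holds if two faces share all four edges (the degenerate $C_4$). In the opposite case, writing one face as $v_1v_2v_3v_4$ with shared edges $v_1v_2$ and $v_3v_4$, the other face must be $v_1v_2v_4v_3$, and the union of the two faces is a copy of $K_4$ on $\{v_1,v_2,v_3,v_4\}$. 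Hence a quadrangulation of minimum degree at least $3$ is face-simple if and only if no edge has its two incident faces forming such a crossed pair on a common $K_4$.

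For the sphere I would combine this with Euler's formula. A quadrangulation of $S_0$ has $E = 2V-4$, so $\sum_v \deg v = 2E = 4V-8$; if it is face-simple then the minimum degree is at least $3$, giving $4V-8 \ge 3V$ and hence $V \ge 8$. Thus $n'(S_0) \ge 8$. The cube $Q_3$ is a simple $8$-vertex quadrangulation of $S_0$ whose dual is the (simple) octahedron, so it is face-simple and $n'(S_0)=8$.

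For $S_g$ with $g \ge 1$ the inequality $n'(S_g) \ge n(S_g)$ is immediate, so it remains to exhibit a face-simple quadrangulation on exactly $n(S_g)$ vertices. I would use the minimal orientable quadrangulations supplied by Theorem \ref{construction} for the pair $(n,t)$ with $n=n(S_g)\ge 5$. Because $t \le n-4$, at most $n-4$ edges are deleted from $K_n$, so every vertex keeps degree at least $(n-1)-(n-4)=3$; thus the minimum degree is at least $3$ and the degree-$2$ obstruction never occurs. By the characterization above it then suffices to show that none of these embeddings contains a crossed pair of faces $v_1v_2v_3v_4$, $v_1v_2v_4v_3$ on a common $K_4$.

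The main obstacle is precisely this verification. I expect to carry it out from the explicit output of the diagonal technique: the quadrangular faces are produced with enough structure that the two faces along any edge can be read off, and one checks they never coincide in two edges (equivalently, the dual has no multiple edge). In a diagonal construction, where quadrangles arise by splitting or pairing triangles of an underlying simple triangulation, such a crossed pair would force two triangles to agree on two edges, which a simple triangulation forbids; making this precise for each family is the technical heart of the argument. The smallest surfaces, starting from $K_5$ on the torus ($n(S_1)=5$), I would check directly. Conceptually the sphere is exceptional because the minimum-degree-$3$ requirement already forces $V \ge 8 > 4 = n(S_0)$, whereas for $g \ge 1$ the minimal order itself compels dense graphs of minimum degree at least $3$, leaving only the crossed-$K_4$ configuration, which the constructions are arranged to avoid.
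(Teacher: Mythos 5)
Your sphere argument is correct, and in fact slightly more elementary than the paper's: the paper bounds $r = |V(H)| \ge 6$ because the dual $H$ is a simple $4$-regular planar graph and hence not $K_5$, whereas you note that face-simplicity forces minimum degree at least $3$ (the two squares at a degree-$2$ vertex would share both of its edges) and then $2m = 4n-8 \ge 3n$ gives $n \ge 8$; both routes close with the cube. Your reduction for $g \ge 1$ is also on the right track: minimum degree at least $3$ holds because at most $n-4$ edges are deleted from $K_n$, and the only remaining obstruction to face-simplicity is a crossed pair of squares $uvwx$ and $uvxw$ sharing the opposite edges $uv$ and $wx$.

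At exactly this point, however, your proof has a genuine gap: you leave the exclusion of the crossed pair as an expectation (``I expect to carry it out from the explicit output of the diagonal technique''), proposing a family-by-family verification that is never performed --- and whose premise is mistaken, since the diagonal technique does not produce quadrangles by splitting or pairing triangles of an underlying simple triangulation; it modifies quadrangulations directly via disc and handle additions, so there is no simple triangulation to appeal to. The missing idea, which renders all such verification unnecessary, is that orientability alone forbids the configuration. In an orientable embedding with every face boundary written clockwise, the two faces incident with a common edge must traverse it in opposite directions; but for the squares $uvwx$ and $uvxw$, whichever cyclic direction one takes for the second square, the two faces traverse one of the shared edges $uv$, $wx$ in the \emph{same} direction, a contradiction. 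This is precisely Observation \ref{fsquad} of the paper (quoted from \cite{LEYZ}): a quadrangular embedding of a simple connected graph with minimum degree at least $3$ that is orientable (or bipartite) is automatically face-simple. With that observation, the minimal quadrangulations of $S_g$ supplied by Theorem \ref{construction} are face-simple as they stand, and no inspection of the constructions --- not even of small cases such as $K_5$ on the torus --- is required.
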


 We show in Section \ref{maintheorems} that all quadrangulations given
in Theorem \ref{construction} are minimal, and that this proves
Theorems \ref{order} and \ref{face-simple}.
 The main tool used to prove Theorem \ref{construction} is an approach
due to Hartsfield, which we call the `diagonal technique' and describe
in Section \ref{diagonal}.  As we explain there, Hartsfield wrote two
papers (one published, one not) using this idea, but her papers did not
contain complete proofs.  One of the contributions of this paper is to
provide an explicit overview of how the diagonal technique works, and to
demonstrate the rigorous use of this method.
 The actual proof of Theorem \ref{construction} is in Section
\ref{constructionproof}, divided into orientable and nonorientable
cases.
 Section \ref{final} gives some final remarks regarding Theorem
\ref{face-simple}.



 %
 Prior results on minimal quadrangulations proved some special cases of
Theorem \ref{construction}, constructing quadrangulations with $n$
vertices and $\binom{n}2-t$ edges for suitable $t$.
 Theorem \ref{complete} deals with the case $t=0$, and Hartsfield and
Ringel's results on octahedral graphs \cite{HR1, HR2} deal with the case
where $n$ is even and $t=n/2$.  They also proved the orientable case
when $n$ is even and $t=n/2+4$ \cite[Section 6]{HR1}.
 Lawrencenko \cite{LA} used a result originally due to White \cite{WH},
which can also be proved using Craft's graphical surface technique
\cite{CR}, to prove the orientable cases where $n$ is even and $n/2 \le
t \le n-4$.
 Liu et al.~\cite[Corollary 7.2]{LEYZ} extended this idea to prove the
nonorientable cases where $n$ is even, $n/2 \le t \le n-4$, and $t
\equiv \frac12 n(n-5)$ (mod $4$) (but not $t \equiv 2+\frac12 n(n-5)$
(mod $4$)).
 Moreover, \cite[Corollary 7.3]{LEYZ} handles all cases (orientable and
nonorientable) where $8 \,|\, n$ and $16 \,|\, t$, and \cite[Corollary
7.4]{LEYZ} handles all nonorientable cases where $t=n-4$ and all
orientable cases where $n-6 \le t \le n-4$.

 %
 %
 %
 %
 %
 %
 %
 %

 %
 Note that Magajna, Mohar and Pisanski \cite{MMP} solved a problem
related to minimal quadrangulations, by showing that for every surface
$\Si$ the minimum number of vertices of a bipartite graph with a
quadrangular embedding in $\Si$ is $\lceil 4 + \sqrt{16-8\chi(\Si)}
\rceil$.

 \section{Relationship between the main theorems}\label{maintheorems}
 
 In this section we show that the quadrangulations described in
Theorem \ref{construction} are minimal, and that Theorem
\ref{construction} implies Theorems \ref{order} and
\ref{face-simple}.
 Suppose we have a quadrangular embedding in a surface $\Si$ with $n$
vertices, $m = \binom{n}{2}-t$ edges, and $r$ faces. Counting sides of
edges gives $2m=4r$ or $r = \frac12 m$, and so from Euler's formula
$\chi(\Si) = n-m+r = n-\frac12 m$.  Hence
 \begin{align}\label{quadeq}
 -2\chi(\Si) =
m - 2n = \binom{n}{2} -t - 2n = {\textstyle\frac12} n(n-5) - t.
 \end{align}
 We have a sufficient condition for such an embedding to be minimal.

 \begin{lemma}[{\cite[Lemma 7.1]{LEYZ}}]\label{minquad}
 Suppose that $n \ge 5$, $0 \le t \le n-4$, and $L$ is a graph with $n$
vertices and $m=\binom{n}{2}-t$ edges.
 Then any quadrangular embedding of $L$ is minimal.
 \end{lemma}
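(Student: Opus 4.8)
The plan is to argue by contradiction, using the Euler-characteristic invariant recorded in \eqref{quadeq}. Let $\Si$ be the surface carrying the given quadrangular embedding of $L$. Since $L$ has $n$ vertices and $\binom{n}{2}-t$ edges, \eqref{quadeq} gives $-2\chi(\Si) = \tfrac12 n(n-5) - t$, so $\chi(\Si)$ is completely determined by $n$ and $t$. Suppose, for contradiction, that $\Si$ also admits a quadrangular embedding of a (simple) graph $L'$ with $n' < n$ vertices and $m'$ edges. The computation preceding \eqref{quadeq} --- namely $2m' = 4r'$ combined with Euler's formula --- applies verbatim to $L'$ and yields $m' = 2n' - 2\chi(\Si)$, with the \emph{same} value of $\chi(\Si)$ since $L'$ embeds in the same surface.

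Next I would exploit simplicity. Because $L'$ is simple, $m' \le \binom{n'}{2}$, and substituting $m' = 2n' - 2\chi(\Si)$ gives $-2\chi(\Si) \le \binom{n'}{2} - 2n' = \tfrac12 n'(n'-5)$. Writing $\phi(k) := \tfrac12 k(k-5) = \binom{k}{2} - 2k$, the expression for $\chi(\Si)$ from the previous paragraph combines with this bound to yield $\phi(n) - t \le \phi(n')$, that is, $\phi(n) - \phi(n') \le t \le n-4$. A short computation shows $\phi(n) - \phi(n-1) = n-3$, and since $\phi$ is strictly increasing on integers $k \ge 3$ (its real minimum is at $k=5/2$), any $n'$ with $3 \le n' \le n-1$ satisfies $\phi(n) - \phi(n') \ge \phi(n) - \phi(n-1) = n-3$. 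Hence $n-3 \le t \le n-4$, which is impossible.

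The only point requiring care --- and the one I expect to be the main (though mild) obstacle --- is securing the monotonicity hypothesis $n' \ge 3$ before invoking that $\phi$ is increasing, i.e.\ excluding the degenerate cases $n' \in \{1,2\}$. This is immediate: a genuine $4$-cycle has four distinct vertices, so every quadrangular embedding uses at least four vertices and therefore $n' \ge 4 > 5/2$. (Alternatively one checks the cases directly, since $-2\chi(\Si) = \phi(n) - t \ge \phi(n) - (n-4) \ge -1$ for $n \ge 5$, while $\phi(1) = -2$ and $\phi(2) = -3$, so the inequality $-2\chi(\Si) \le \phi(n')$ already fails for $n' = 1,2$.) With $4 \le n' \le n-1$ in hand, the monotonicity step is justified and the contradiction stands, proving that the embedding of $L$ is minimal.
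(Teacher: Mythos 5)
Your proof is correct, and there is nothing in the paper to compare it against in detail: the paper does not prove Lemma \ref{minquad} but imports it verbatim from \cite[Lemma 7.1]{LEYZ}. Your argument --- fixing $\chi(\Si)$ via equation (\ref{quadeq}), deducing $m' = 2n' - 2\chi(\Si)$ for any smaller quadrangulation, using simplicity to get $-2\chi(\Si) \le \frac12 n'(n'-5)$, and then the monotonicity of $\frac12 k(k-5)$ for $k \ge 3$ (with the correct observation that $n' \ge 4$) to force the contradiction $n-3 \le t \le n-4$ --- is exactly the standard Euler-count argument behind that citation, and it is consistent with the machinery the paper itself develops around (\ref{quadeq}) and Lemma \ref{fprop}.
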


 We now consider properties of the right-hand side of (\ref{quadeq}).

 \begin{lemma}\label{fprop}
 Let $f(x) = \frac12 x(x-5)$, defined on $[3,\infty)$.

 \noindent
 (a) If $n \ge 3$ is an integer, then $f(n)$ is an integer.

 \noindent
 (b) For every real number $y > f(3)=-3$ there exists a unique pair
$(n,t)$ where $n \ge 4$ is an integer, $0 \le t < n-3$, and $y =
f(n)-t$.
 Moreover, $\displaystyle n = \lceil f\iv(y) \rceil
 = \left\lceil \frac{5 + \sqrt{25+8y}}{2} \right\rceil$.

 \noindent
 (c) Therefore, if $k \ge -2$ is an integer there exists a unique pair
of integers $(n,t)$ where $n \ge 4$, $0 \le t \le n-4$, and $k
= f(n)-t$ (or
 $\frac12 n(n-5) = t+k$).  Moreover,
 $\displaystyle n = \left\lceil \frac{5 + \sqrt{25+8k}}{2}
\right\rceil$.

 \end{lemma}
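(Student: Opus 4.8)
The plan is to prove the three parts in order, since (a) feeds (b) and (b) feeds (c). For part (a), I would observe that $n$ and $n-5$ have opposite parities, as they differ by the odd number $5$; hence exactly one of them is even, the product $n(n-5)$ is even, and $f(n) = \frac12 n(n-5)$ is an integer.

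The heart of the argument is part (b), and the key computation there is the first-difference identity
\[
f(n) - f(n-1) = \tfrac12\bigl[n(n-5) - (n-1)(n-6)\bigr] = n-3 .
\]
Setting $t = f(n) - y$, the required bound $0 \le t < n-3$ becomes $f(n) - (n-3) < y \le f(n)$, which by the identity is exactly $f(n-1) < y \le f(n)$. Thus the existence-and-uniqueness problem reduces to showing that each real $y > f(3) = -3$ lies in $(f(n-1), f(n)]$ for a unique integer $n \ge 4$. Since $f'(x) = x - \tfrac52 > 0$ on $[3,\infty)$, the function $f$ is strictly increasing there, with $f(3) = -3$ and $f(n) \to \infty$; hence the half-open intervals $(f(n-1), f(n)]$ for $n \ge 4$ are pairwise disjoint and cover $(-3,\infty)$, which gives the unique $n$, and then $t = f(n)-y$ is determined. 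For the closed form I would invert the increasing branch: solving $y = \tfrac12(x^2 - 5x)$ gives $f\iv(y) = (5 + \sqrt{25+8y})/2$, and applying the increasing map $f\iv$ to $f(n-1) < y \le f(n)$ yields $n-1 < f\iv(y) \le n$, which is precisely the statement $\lceil f\iv(y)\rceil = n$.

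For part (c), I would simply specialize (b) to $y = k$. Because $k$ is an integer, the hypothesis $k \ge -2$ is equivalent to $k > -3 = f(3)$, so (b) supplies a unique integer $n \ge 4$ together with $t = f(n) - k$ satisfying $0 \le t < n-3$. By part (a) the quantity $f(n)$ is an integer, so $t = f(n) - k$ is an integer; and an integer $t$ with $0 \le t < n-3$ satisfies $0 \le t \le n-4$. The formula for $n$ and the uniqueness are inherited directly from (b).

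The only genuine content is the difference identity $f(n) - f(n-1) = n-3$; once it is in hand, strict monotonicity of $f$ does all the remaining work and parts (a) and (c) are bookkeeping. I do not expect a real obstacle, but some care is needed with the half-open versus closed inequalities so that the inversion lands on the ceiling $\lceil\,\cdot\,\rceil$ rather than a floor, and with confirming $f\iv(y) > 3$ for $y > -3$ so that indeed $n \ge 4$.
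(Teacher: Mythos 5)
Your proposal is correct and takes essentially the same route as the paper: strict monotonicity of $f$ on $[3,\infty)$, the decomposition of $(-3,\infty)$ into the intervals $(f(n-1),f(n)] = \bigl(f(n)-(n-3),\, f(n)\bigr]$ (your difference identity $f(n)-f(n-1)=n-3$), and inversion by the quadratic formula to obtain $n = \lceil f^{-1}(y)\rceil = \lceil (5+\sqrt{25+8y})/2\rceil$. The only difference is that you spell out the parity argument in (a) and the integrality bookkeeping in (c), which the paper dispatches in one line.
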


 \begin{proof} For (a), if $n$ is an integer then $2f(n)=n(n-5)$ is
even, so $f(n)$ is an integer.  For (b), since $f'(x) = x - \frac52 > 0$
on $[3, \infty)$, $f$ is strictly increasing, and clearly $f(x) \to
\infty$ as $x \to \infty$.  Therefore, every $y > f(3)$ lies in
a unique interval $(f(n-1), f(n)] = (f(n)-(n-3), f(n)]$
for some integer $n \ge 4$, so that $y = f(n)-t$ where $0 \le t < n-3$. 
Moreover, $n-1 < f\iv(y) \le n$ so that $n = \lceil f\iv(y) \rceil$, and
$f\iv(y)$ is found by the quadratic formula.  Now (c) follows from (a)
and (b).
 \end{proof}

 \begin{proof}[Proof that Theorem \ref{construction} implies Theorem
\ref{order}]
 
 A quadrangulation has $n \ge 4$ vertices, so the special
case $(n,t)=(4,2)$ and regular case
$(n,t)=(4,0)$ of Theorem \ref{construction} verify Theorem
\ref{order} for $\Si = S_0$ and $N_1$, respectively.  
 By equation (\ref{quadeq}) a quadrangulation of $N_2$ must
have
 $\frac12 n(n-5) = -2\chi(N_2) + t = t \ge 0$, so $n \ge
5$, and if $n=5$ then $t=0$.
 By Theorem \ref{construction} there is no quadrangulation of $N_2$ for
$(n,t)=(5,0)$, so the one for $(n,t)=(6,3)$ is minimal, verifying
Theorem \ref{order} for $\Si = N_2$.

 So assume $\Si \ne N_2$ is a surface with $\chi = \chi(\Si) \le 0$. 
Applying Lemma \ref{fprop}(c) with $k = -2\chi \ge 0$, there are
integers $n$ and $t$ that satisfy equation (\ref{quadeq}) (or
 $\frac12 n(n-5) = t-2\chi$)
 and $0 \le t \le n-4$.  Moreover,
 $\displaystyle n = \left\lceil \frac{5 + \sqrt{25-16\chi}}{2}
\right\rceil \ge 5$.
 If $\Si$ is orientable then $\chi$ is even, so
 $\frac12 n(n-5)=t-2\chi
\equiv t$ (mod $4$).  Thus, the first part of Theorem \ref{construction} gives
an orientable
quadrangulation with $n$ vertices and $\binom{n}2-t$ edges.  This is embedded in $\Si$ by (\ref{quadeq}), minimal by Lemma
\ref{minquad}, and has order $n$ satisfying Theorem \ref{order}.  We use
the second part of Theorem \ref{construction} in a similar way when
$\Si$ is nonorientable.
 \end{proof}

 We also verify Theorem \ref{face-simple}, using the following.

 \begin{obs}[{\cite[Observation 3.4]{LEYZ}}]\label{fsquad}
 Suppose $\Phi$ is a quadrangular embedding of a simple connected graph
with minimum degree at least $3$.
 If $\Phi$ is not face-simple then it contains two squares of the form
$uvwx$ and $uvxw$. Thus, if $\Phi$ is orientable or bipartite then it is
face-simple.
 \end{obs}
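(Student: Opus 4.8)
The plan is to analyze what it means for two faces to share an edge, to use the minimum-degree hypothesis to force any two faces sharing two edges into exactly the configuration $uvwx$, $uvxw$, and then to derive the two ``Thus'' statements by short orientation/parity arguments.

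First I would record two easy preliminaries. Since every face is bounded by a $4$-cycle, its boundary traverses four \emph{distinct} edges, so no edge has the same face on both sides; hence the dual has no loops, and ``not face-simple'' means precisely that two \emph{distinct} faces $F_1, F_2$ share at least two edges. Moreover every edge lies on exactly two face-sides, which by the no-loop remark belong to two distinct faces; so if $F_1$ and $F_2$ both contain an edge $e$, then they are exactly the two faces incident with $e$.

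The key step, which I expect to be the main obstacle, is to show that \textbf{two distinct faces cannot share two adjacent edges}. Suppose $F_1$ and $F_2$ both contain the adjacent edges $ab$ and $bc$ meeting at $b$. Since each face is a cycle, it uses exactly two edges at each of its vertices, so at $b$ both faces use precisely $\{ba, bc\}$; thus each of $F_1, F_2$ traverses the corner at $b$ bounded by the edge-ends $ba$ and $bc$. In a cellular embedding each corner belongs to a unique face, and a face traversing $a\text{-}b\text{-}c$ (resp.\ $c\text{-}b\text{-}a$) occupies the sector in which $bc$ immediately follows $ba$ (resp.\ $ba$ immediately follows $bc$) in the rotation at $b$. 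Here the minimum-degree hypothesis is essential: when $\deg(b)\ge 3$ the edges $ba$ and $bc$ are consecutive in the cyclic rotation in \emph{at most one} of the two orders, so only one such sector exists, forcing $F_1=F_2$, a contradiction. (When $\deg(b)=2$ both sectors exist and the statement genuinely fails, which is exactly why degree $3$ is needed.)

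Granting this, the configuration is forced combinatorially: any two edges shared by $F_1$ and $F_2$ are pairwise non-adjacent, and a $4$-cycle contains at most two pairwise non-adjacent edges, namely an opposite pair. So $F_1$ and $F_2$ share exactly two opposite edges; writing $F_1 = uvwx$ with shared edges $uv$ and $wx$, the vertex set of $F_2$ is also $\{u,v,w,x\}$, and the only $4$-cycle on these vertices other than $uvwx$ containing both $uv$ and $wx$ is $uvxw$, giving $F_2 = uvxw$ as claimed. For the final sentence I would then check the two squares directly. In the bipartite case the $4$-cycle $uvwx$ forces $u,w$ and $v,x$ into opposite parts, but $F_2$ uses the edge $vx$, joining two vertices of the same part, which is impossible. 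In the orientable case, fix a consistent orientation; since $F_1, F_2$ are the two faces at $uv$ they traverse $uv$ in opposite directions, and tracing $F_2 = uvxw$ accordingly forces $F_1$ and $F_2$ to traverse the other shared edge $wx$ in the \emph{same} direction, contradicting consistency. Either way no such pair of squares exists, so $\Phi$ is face-simple.
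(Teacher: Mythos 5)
Your proof is correct, and in fact it supplies an argument that this paper never gives: Observation \ref{fsquad} is imported verbatim from \cite[Observation 3.4]{LEYZ} without proof, so there is no in-paper proof to compare against. Your three steps are exactly what the statement needs: the rotation/corner argument (two distinct faces sharing adjacent edges $ba, bc$ would require the pair $\{ba,bc\}$ to occupy two distinct corners at $b$, impossible once $\deg(b)\ge 3$) is precisely where the minimum-degree hypothesis enters, and it correctly forces the two shared edges to be an opposite pair, yielding the squares $uvwx$ and $uvxw$; the bipartite case then dies on the odd chord $vx$, and the orientable case on the fact that a consistently oriented embedding traverses each edge once in each direction, while your two squares would traverse $wx$ twice in the same direction. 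The only cosmetic remark is that your parenthetical about $\deg(b)=2$ is not needed under the stated hypotheses, but it correctly explains why the degree bound is sharp (e.g., the two faces of $C_4$ in the sphere).
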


 \begin{proof}[Proof of Theorem \ref{face-simple}]
 Suppose $\Phi$ is a face-simple quadrangulation of $S_0$ with $n$
vertices, $m$ edges and $r$ faces.  By Euler's formula $m = 2n-4$ and $r
= \frac12 m = n-2$.
 Let $H$ be the underlying graph of the dual $\Phi^*$.  Then $r = |V(H)|
\ge 6$, because $H$ is a $4$-regular simple graph that is planar and
hence not $K_5$.  Hence $n = r+2 \ge 8$, and the usual quadrangular
embedding of the cube (whose dual is the octahedron, which is simple)
has $n=8$.  Thus, $n'(S_0) = 8$.

 For $g \ge 1$, we know from above that there is a minimal
quadrangulation $\Phi$ of $S_g$ as in Lemma \ref{minquad}.  Since the
underlying graph is obtained from a complete graph by deleting at most
$n-4$ edges, the edge-connectivity, and hence also the minimum
degree, is at least $(n-1)-(n-4)=3$, and so $\Phi$ is face-simple by
Observation \ref{fsquad}.  Thus, $n'(S_g) = n(S_g)$.
 \end{proof}

 \section{Hartsfield's diagonal technique}\label{diagonal}

 In this section, we describe the operations that form part of a method 
introduced by Hartsfield \cite{HA, HA94p}, which we will call the \emph{diagonal technique}.
 This technique applies specifically to constructing embeddings that are
quadrangular, or where most faces are squares (quadrangles).

 This technique was used by Hartsfield in
two papers, one published \cite{HA} and one not \cite{HA94p}.
 In \cite{HA} Hartsfield claimed to construct nonorientable quadrangular
embeddings of almost all complete multipartite graphs with an even
number of edges, except for $K_5$ and complete tripartite graphs
$K_{1,m,n}$.
 This included complete graphs $K_n$ with $n \equiv 0$ (mod $4$) (for $n
\equiv 1$ (mod $4$) Hartsfield used embeddings from \cite{HR2}).
 In \cite{HA94p} Hartsfield claimed to construct both orientable and
nonorientable minimum genus embeddings with all faces of degree
$4$ or more for $K_n$, $n \ge 4$.
 Unfortunately Hartsfield did not provide rigorous proofs in either of
these papers.  She illustrated the proof ideas with small examples, and
seemed to assume that it was clear how to generalize these.
 But she did not provide an explicit overview of how the constructions
are supposed to work and so from her papers it is hard to see how to extend the
small examples.
 Sadly, Hartsfield died in 2011, so she cannot provide
further elucidation.  But we have distilled the main ideas from what she
wrote, and we provide an outline at the end of this section, after
defining necessary concepts and operations.

 We hope that rigorous versions of Hartsfield's proofs will appear
eventually.  Lawrencenko et al.~\cite{LCYHp} are preparing a paper that
gives alternative proofs for the main result of \cite{LEYZ}, which
determined the minimum genus for orientable and nonorientable embeddings
with all faces of even degree, and with all faces of degree at least
$4$, for complete graphs.  This includes Theorem \ref{complete} as a
special case.  These alternative proofs are based both on current
graphs as in \cite{LCY18}, and on Hartsfield's proof
outlines from \cite{HA94p} which use the diagonal technique.

 As a byproduct, our results in this paper also provide a proof of
Theorem \ref{complete} using Hartsfield's diagonal technique. 
 However, since our goal is the construction of minimal
quadrangulations, rather than embeddings of complete graphs with minimum
genus subject to all faces having degree at least $4$, our proof
differs significantly from those in \cite{HA, HA94p, LCYHp}.
 We add two vertices at a time, rather than eight, and we use additional
operations (handle additions of Type III and crosscap additions; see
below).

 We now introduce some definitions that we will need to
implement the diagonal technique.
  Let $\Phi$  be a quadrangulation of a surface $\Si$.
  Every face of $\Phi$ is a square, bounded
by a $4$-cycle.
  We describe squares by listing their vertices in order around
the boundary.  For an orientable embedding, we always list the vertices
in clockwise order.
 
 Two nonadjacent vertices $v_{i}$ and $v_{j}$ of a square
form a {\em diagonal}, denoted by
$\di(v_{i},v_{j})$.
 The square is called the \emph{underlying square} of $\di(v_{i},v_{j})$.
 Clearly, each diagonal depends on an underlying square and this
underlying square may be not unique.
 For example, in Figure 1, $\di(v_1,v_3)$ has three
different underlying squares. When it will not cause
confusion,
 we pinpoint a diagonal but do not explicitly
mention its underlying square.
 If we do wish to indicate the underlying square, we write
$\diq(v_i,v_j,v_i v_k v_j v_\ell)$. 
 A \emph{diagonal set} is a set of  diagonals
that have different underlying squares.  A diagonal set  is
\emph{full} if it contains at least one diagonal incident
with every vertex, \emph{perfect} if it contains exactly one
diagonal incident with every vertex, and \emph{$v_i$-nearly-perfect} if
it contains no diagonal incident with $v_i$ and exactly one diagonal
incident with every vertex not equal to $v_i$.

 \begin{figure}[!hbtp]\refstepcounter{figure}\label{planar}
  \begin{center}
  \includegraphics[scale=0.7]{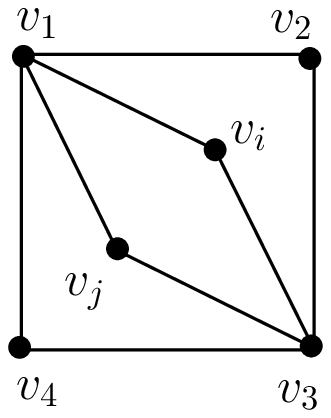} \\
  {Figure~\ref{planar}: Disc addition}
  \end{center}
  \end{figure}

 \medskip
 \noindent {\bf Operation 1: Disc  addition}
 \medskip

  Let $f=v_1v_2v_3v_4$ be a square of the quadrangulation $\Phi$ of
a surface $\Si$. Add two new vertices
 $v_{i}$ and $v_{j}$ into the interior of the square $f$,
 and then join $v_{i}$ and $v_{j}$  to the diagonal
$\di(v_{1},v_{3})$ of $f$
  by  four new edges $v_{1}v_{i}$, $v_{1}v_{j}$, $v_{3}v_{i}$ and $v_{3}v_{j}$.
  The square $f$ is divided into  three new squares $f_1=v_{1}v_{2}v_{3}v_{i}$, $f_2=v_{1}v_{i}v_{3}v_{j}$ and $f_3=v_{1}v_{j}v_{3}v_{4}$.
   All the new squares $f_i$ with $i\in \{1,2, 3\}$ have $\di(v_1,v_3)$
as one of their diagonals. This operation is
called a {\em disc addition} (Hartsfield called this a
\emph{planar addition}).
  Applying a disc addition to a square of $\Phi$ generates a new
quadrangulation of the same surface $\Si$, with the
same genus.  See  Figure \ref{planar} for an illustration.
 Disc additions preserve $\di(v_1,v_3)$ as a diagonal, although
the underlying square may change.  Usually we add $\di(v_i,v_j)$ to the
current diagonal set.

 \medskip
 \noindent {\bf Operation 2: Handle addition}
 \medskip

 Let $f_{1}=v_{1}v_{2}v_{3}v_{4}$ and  $f_{2}=u_{1}u_{2}u_{3}u_{4}$ be
two squares of  $\Phi$. First, cut the open discs bounded by
$f_1$ and $f_2$ along their boundaries and remove them from the surface
$\Si$.
 Second, add a handle (cylinder) by identifying its two 
boundaries with the boundaries of $f_{1}$ and $f_{2}$ respectively.
Finally, add four new edges on the handle, each joining a vertex of
$f_1$ to a vertex of $f_2$, so that all resulting faces are squares. 
This operation is called a {\em handle addition.}
 The resulting embedding is also a quadrangular embedding.
  After applying a handle addition, the genus of the new
quadrangular embedding increases by one  if $\Si$ is orientable, and by
two if $\Si$ is nonorientable.

 We represent handle additions by the
planar diagrams shown in Figure~\ref{handle}.
 One of the two original squares is called the {\em outer square}
($f_1=v_1 v_2 v_3 v_4$ in Figure~\ref{handle}) and the other 
is called the {\em inner square} ($f_2=u_1 u_2 u_3 u_4$ in
Figure~\ref{handle}).
 The handle is the annular region between the outer and inner
squares.
 If the initial embedding is nonorientable, we may use the vertices of
the inner and outer squares in either order, as convenient, and the
resulting embedding is always nonorientable.

 If the initial embedding is orientable we must take more care.  Usually
we want the resulting embedding to also be orientable.
 When we add a handle to an orientable surface $\Si$ to create
a new orientable surface, a given direction around the handle
corresponds to the clockwise direction in $\Si$ at one end of the handle
and the counterclockwise direction in $\Si$ at the other end.
 In particular, consider the clockwise direction around a handle
as represented in the figure.  We assume this corresponds to the
clockwise direction in $\Si$ for the outer square; then it must
correspond to the counterclockwise direction in $\Si$ for the inner
square.  So if $v_1v_2v_3v_4$ and $u_1u_2u_3u_4$ are in clockwise order
in $\Si$, the figure has $v_1v_2v_3v_4$ and $u_1u_4u_3u_2$ in clockwise
order.
 For the new squares created by a handle addition, the clockwise order
of their vertices in the new surface is the clockwise order in the
figure.
 If we did use clockwise order $u_1u_2u_3u_4$ for the
inner square in the figure, we would add a twisted handle, and the new
embedding would be nonorientable.

 \begin{figure}[!hbtp]\refstepcounter{figure}\label{handle}
  \begin{center}
  \includegraphics[scale=0.9]{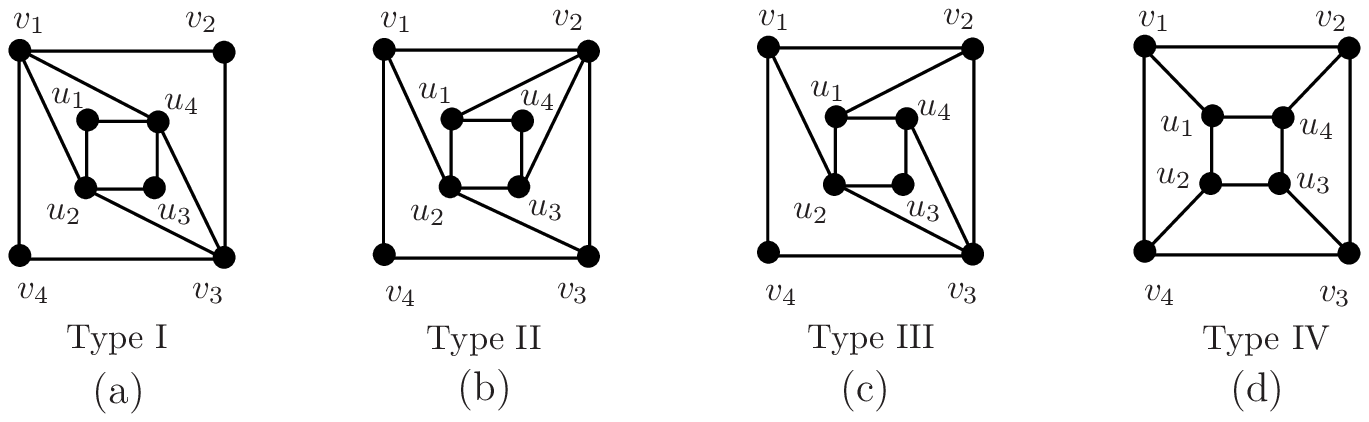} \\
  {Figure~\ref{handle}: The four types of handle  additions}
  \end{center}
  \end{figure}

  For two given squares, there are four different
types of handle additions between them based on the new edge
connections, which are listed in Figure~\ref{handle}.
 If we wish to be
specific, we use the labels in the figure and refer to a \emph{handle
addition of Type I, II, III or IV}, as appropriate.
 (Our Types I, II and IV correspond to Hartsfield's Types 1, 2
and 3, respectively.  Hartsfield did not use handle additions of Type
III.)
 Handle additions of Types I and III preserve $\di(v_1,v_3)$ and
$\di(u_2,u_4)$ as diagonals, although the underlying squares may change. 
Handle additions of Type II preserve $\di(v_1,v_3)$ and $\di(u_1,u_3)$.
Handle additions of Type IV are not guaranteed to preserve diagonals of
$f_1$ or $f_2$.

 \medbreak
 \noindent {\bf Operation 3: Crosscap addition}
 \medskip

  Let $f=v_1v_2v_3v_4$ be a square of  $\Phi$.
  Cut a disc from the interior of the square $f_1$, which leaves the surface $\Si$ with a hole.
 Then identify antipodal points of the boundary of the hole, which generates a crosscap inside of $f$.
 Finally, add two new edges $v_{1}v_{3} $ and
$v_{2}v_{4}$  passing through the new crosscap.  This
operation is called a \emph{crosscap addition}.
 See Figure \ref{crosscap}.
  The resulting embedding is a nonorientable
quadrangular embedding, and the Euler characteristic decreases by one
(so the genus increases by one if the original embedding was also
nonorientable). Neither diagonal of $f$ is a diagonal of either of the new
squares. (Hartsfield did not use crosscap additions.)

  \begin{figure}[!hbtp]\refstepcounter{figure}\label{crosscap}
  \begin{center}
  \includegraphics[scale=0.7]{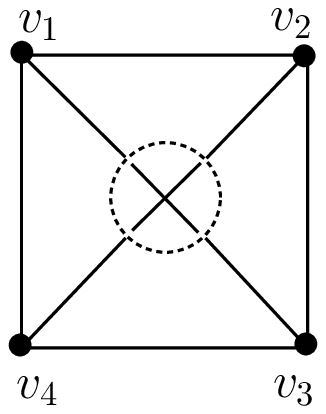} \\
  {Figure~\ref{crosscap}:  Crosscap addition}
  \end{center}
  \end{figure}


 \medskip
 \noindent {\bf Outline of the diagonal technique}
 \medskip

 We now provide a brief overview of the
diagonal technique, using the concepts and operations defined above.
 The idea is to replace squares in a known embedding by new squares
while adding edges and sometimes also vertices.
 In this paper the known embedding will be a quadrangulation, but in
general it may have some faces that are not squares.
 All or most of the vertices of the known embedding are partitioned into
a diagonal set of vertex pairs, so that the
vertices in each pair occur as diagonally opposite vertices
in an existing square.
 New vertices are first added in pairs using disc additions, adding
pairs to the diagonal set.
 Then most edges incident with the new vertices are added using handle
additions of Type I.  Each such addition usually takes two pairs of
vertices in the current diagonal set, uses an underlying square of one
pair as the outer square and an underlying square of the other pair as
the inner square, and adds four edges between the two pairs.
 
 A small number of edges may be added using handle additions of Types
II, III and IV and (in the nonorientable case) crosscap
additions.
  Often the exact details
(in particular, which underlying square is used for each diagonal pair)
of the handle additions of Type I do not matter, so they can be done in
a fairly arbitrary way, except that the necessary faces must be
constructed for any additions of handles of Types II, III and IV
and crosscaps.

 Hartsfield's diagonal technique is particularly suited for constructing
minimal quadrangulations, because it adds four (or sometimes two) edges
at a time, which is precisely what we need to get the embeddings
described in Theorem \ref{construction}.
 The `graphical surface' construction due to Craft \cite{CR}, which was
used in \cite{LA, LEYZ} to construct some minimal quadrangulations, can
be regarded as a special case of the diagonal technique.  It is
just the case where we start with an embedding of $C_4$ in the sphere
and use only disc additions and handle additions of Type I that preserve
the current diagonal set.

 Hartsfield's diagonal technique belongs to a more general class of
methods that construct embeddings, particularly orientable embeddings,
by adding handles, sometimes called \emph{tubes}, that carry specific
edges.  We mention a few examples of such methods.
 White \cite{WH70} and Pisanski \cite{PI1} (see also
\cite[Subsection 3.5.4]{GT}) used tubes to construct orientable
quadrangular embeddings of cartesian products of
bipartite graphs; their operations are
similar to our handle addition of Type IV.
 Lv and Chen \cite{LC19} used handle insertions where each handle
carries four or five edges to construct minimum orientable genus
embeddings of $K_{n,n,1}$ when $n$ is odd.
 Ma and Ren \cite{MR19} used tubes, typically added between triangular
faces and carrying five edges, to construct orientable minimum genus
embeddings of $C_m+K_n$ for $n \ge 5$ and $m \ge 6n-13$.

 \section{Proof of Theorem \ref{construction}}\label{constructionproof}

 In this section, we focus on the proof of Theorem \ref{construction}, 
which  is divided into two major cases --  orientable
surfaces and nonorientable surfaces.
 Apart from a few small cases, our proof is self-contained and
does not rely on earlier constructions of minimal quadrangulations.

 Denote the vertex set of a graph (or an embedding) of order $n$ by
$\{v_{1}, v_{2}, \dots,  v_{n}\}$.
 We know that in a disc addition or a handle addition of Type
I,
 if a diagonal of a diagonal set $\sD$ is used,
 then at least two new squares with the same diagonal are obtained. So this diagonal still occurs in the resulting embedding.
 We then replace the underlying square of the diagonal in
$\sD$ by an arbitrary choice of one of the two new squares
unless otherwise stated.
 For convenience,  
the resulting diagonal set is still denoted by $\sD$.
 Such situations occur frequently in the proof of Theorem
\ref{construction}.

 Except in some small cases, Theorem
\ref{construction} does not mention the surface in which an embedding
occurs.  The surface can always be determined from equation
(\ref{quadeq}), taking orientability into account.

 \subsection*{Orientable surfaces}

 Let $\Q(n,t)$ denote an orientable quadrangular embedding of a simple
graph with $n$ vertices and $\binom{n}2-t$ edges, for any integers $n
\ge 1$ and $t$ with $0 \le t \le \binom{n}2$.
 Let $T_n = \{t \;|\; 0 \le t \le n-4,\, t \equiv \frac12 n(n-5)$ (mod
$4$)$\}$ for each integer $n \ge 4$.
 The elements of $T_n$ form an arithmetic progression with difference
$4$.
 The condition $t \equiv \frac12 n(n-5)$ (mod $4$) is equivalent to $8
\,|\, n(n-5)-2t$ and so we must consider the value of $n \bmod 8$ to
determine $T_n$.
 We must show that there exist a $Q(4,2)$ in $S_0$ and $\Q(n, t)$ for
each $n \ge 4$ and $t \in T_n$.  We divide the
proof into the cases where $n$ is even and odd.

 Recall that when working with orientable surfaces we must pay close
attention to the clockwise order of vertices around each square.


 \begin{lemma}\label{orieven}
 There exist a $\Q(4,2)$ and $\Q(n,t)$ for each even $n \ge 4$ and $t
\in T_n$.
 \end{lemma}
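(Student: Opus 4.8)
The plan is to induct on the even integer $n$, raising $n$ by $2$ at each step, after strengthening the statement so that every embedding $\Q(n,t)$ is produced together with a \emph{perfect} diagonal set $\sD$, i.e.\ one diagonal incident with each vertex, the underlying squares being distinct. This invariant is exactly what lets the two new vertices be attached to the old ones in diagonal pairs. The base case is the scaffold $\Q(4,2)$, the embedding of $C_4$ in $S_0$, whose two pairs of opposite corners form an obvious perfect diagonal set; although $T_4$ and $T_6$ are empty, the scaffolds arising here lie in the correct residue class $t\equiv\frac12 n(n-5)\pmod 4$ and so properly seed the induction.

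For the inductive step, suppose $\Q(n,t)$ with perfect diagonal set $\sD=\{\di(a_1,b_1),\dots,\di(a_{n/2},b_{n/2})\}$ is in hand. First apply a disc addition inside an underlying square of $\di(a_1,b_1)$, introducing the new pair $v_{n+1},v_{n+2}$, joining each of them to $a_1$ and $b_1$, and adjoining $\di(v_{n+1},v_{n+2})$ to $\sD$. Then apply handle additions of Type I, performed in the orientation-preserving manner of Figure~\ref{handle}, each taking an underlying square of $\di(v_{n+1},v_{n+2})$ and an underlying square of a still-unused old diagonal $\di(a_i,b_i)$; such a handle installs the four edges between $\{v_{n+1},v_{n+2}\}$ and $\{a_i,b_i\}$ while preserving all diagonals in play. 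After $k$ handles ($0\le k\le n/2-1$) the new pair is joined to $k+1$ old pairs, and edge counting together with (\ref{quadeq}) gives a $\Q(n+2,t')$ with $t'=t+(2n-3)-4k$; the diagonal set, enlarged by $\di(v_{n+1},v_{n+2})$ and still perfect, passes to the next stage. As $k$ runs over $0,\dots,n/2-1$ this realizes every $t'\equiv t+1\pmod 4$ with $t+1\le t'\le t+2n-3$.

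Running the step from the smallest anchor $\Q(n,\min T_n)$, I then check that the reachable values exhaust $T_{n+2}$. Since $\frac12(n+2)(n-3)-\frac12 n(n-5)=2n-3\equiv 1\pmod 4$ for even $n$, we get $\min T_{n+2}\equiv\min T_n+1\pmod 4$; a short analysis of $n\bmod 8$ shows that when $\min T_n\in\{0,1,2\}$ (that is, $n\equiv 0,2,4$) one has $\min T_{n+2}=\min T_n+1$, so the reachable progression $\{t+1,t+5,\dots,t+2n-3\}$ begins exactly at $\min T_{n+2}$ and, because $t+2n-3\ge n-2=\max T_{n+2}$, sweeps out all of $T_{n+2}$. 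The sole exception is $t'=0$, arising precisely when $n+2\equiv 0\pmod 8$ (so $\min T_n=3$ and the generic step only reaches $4$): this target is the complete graph $K_{n+2}$ and must be built separately.

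The hard part is this anchor layer. For the induction to continue, each anchor $\Q(n,\min T_n)$---and in particular each $K_n$ with $n\equiv 0\pmod 8$---must itself carry a perfect diagonal set, since only from the anchor can the least element of $T_{n+2}$ be reached. For $n\equiv 2,4,6$ the anchor is delivered by the previous step (whose reachable minimum is exactly $\min T_n$) and so inherits a perfect diagonal set automatically; the genuine obstacle is $K_n$ for $n\equiv 0\pmod 8$, which cannot come from the generic step (that would require the nonexistent quadrangulation of $K_{n-2}$). I would construct it directly---thereby also re-proving the relevant case of Theorem~\ref{complete}---by starting from $\Q(n-2,3)$, attaching the new pair by a disc addition and Type I handles as above, and then installing the four remaining edges (the edge $v_{n-1}v_n$ together with the three old missing edges) by a small number of handle additions of Types II, III and IV, whose very purpose is to add edges the Type I handles cannot. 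Making this succeed forces the inductive hypothesis to record not merely a perfect diagonal set but the exact placement of the missing edges, so that at each complete-graph step those edges lie on squares reachable by the special handles; carrying this bookkeeping consistently through the induction is the main technical burden.
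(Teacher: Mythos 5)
Your overall strategy is the paper's own: Hartsfield's diagonal technique, with period-$2$ steps (a disc addition followed by Type I handle additions, all preserving a perfect diagonal set) sweeping out $T_{n+2}$ from an anchor, plus a special handle addition to complete $K_n$ when $n \equiv 0 \pmod 8$. Your arithmetic ($\min T_n = 0,1,2,3$ for $n \equiv 0,2,4,6 \pmod 8$; reachable values $t+1, t+5, \dots, t+2n-3$; the single unreachable target $t'=0$) matches the paper's stage-by-stage counting exactly; the paper merely packages your four period-$2$ steps into one period-$8$ induction from $\Q(n,0)$ to $\Q(n+8,0)$, whose Stage 4 ends with a Type IV handle addition.

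However, there is a genuine gap, and it is precisely the part you set aside as ``the main technical burden.'' A handle addition installs exactly four edges between the vertices of two \emph{existing disjoint squares}, in one of the rigid patterns of Figure~\ref{handle}; so to absorb the four missing edges $v_{n+1}v_{n+2}$, $v_{n+3}v_{n+4}$, $v_{n+5}v_{n+6}$, $v_{n+7}v_{n+8}$ in one stroke you must already possess two faces, one containing $v_{n+1},v_{n+3},v_{n+5},v_{n+7}$ and the other containing $v_{n+2},v_{n+4},v_{n+6},v_{n+8}$, with clockwise orders compatible both with the Type IV pattern and with orientability of the added handle. Such squares cannot be conjured at the complete-graph step alone: in the paper they are engineered across all four preceding stages by reserving specific faces (the first two Type I handles of Stages 1 and 2 and the last two of Stages 3 and 4 each create a reserved square) and by re-pairing the diagonal set in Stage 2 to $\di(v_{n+1},v_{n+3})$ and $\di(v_{n+2},v_{n+4})$ with designated underlying squares. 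You acknowledge that the induction hypothesis must record extra data, but you do not specify what data or how it propagates through the intermediate steps, and your suggestion that handle additions of Types II and III will be needed is off target in the even case --- the paper's even lemma uses only Type IV (Types II and III enter only in the odd case, Lemma~\ref{oriodd}). Finally, your chain still needs a seed: the paper starts the induction at $n=8$ with the Hartsfield--Ringel quadrangulation of $K_8$ in $S_4$ together with an explicit perfect diagonal set, whereas building $K_8$ from your $\Q(6,3)$ anchor is exactly the unexecuted crux at its smallest instance. Without the explicit reserved-square bookkeeping and the clockwise-order verifications that keep the final handle untwisted, what you have is a correct outline rather than a proof.
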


 \begin{proof}
 Clearly,  a 4-cycle is a quadrangulation $\Q(4,2)$ of the sphere $S_{0}$.
 We have
 $T_4 = \{t \;|\; 0 \le t \le 0,\, t \equiv 2$ (mod $4$)$\} = \emptyset$
 and $T_6 = \{t \;|\; 0 \le t \le 2,\, t \equiv 3$ (mod
$4$)$\} = \emptyset$, so there is nothing else to prove for $n \le 6$.
 For $n \ge 8$ we proceed inductively.

 \begin{basis}
 There exist $\Q(8,t)$ for all $t \in T_8$.  In particular, there exists
a $\Q(8,0)$ with a perfect diagonal set.
 \end{basis}

 We have $T_8 = \{0, 4\}$. Hartsfield and Ringel \cite{HR1} gave a
quadrangular embedding $\Phi_{8}$ of $K_{8}$ in $S_{4}$, shown in Figure
\ref{4k8}.  This is the required $\Q(8,0)$, with a perfect diagonal
set using the squares shaded in the figure, namely
 $$\sD_8 =\{
  \diq(v_1,v_2,  v_1v_6v_2v_5),
  \diq(v_3,v_4, v_3v_7v_4v_8),
  \diq(v_5,v_6, v_5v_8v_6v_4),
  \diq(v_7,v_8, v_7v_1v_8v_2)
 \}.$$
 Also in \cite{HR1}, Hartsfield and Ringel constructed
a quadrangular embedding of the octahedral graph  $O_{8}$, which is the
required $\Q(8,4)$.

 \begin{figure}[!hbtp]\refstepcounter{figure}\label{4k8}
  \begin{center}
  \includegraphics[scale=0.85]{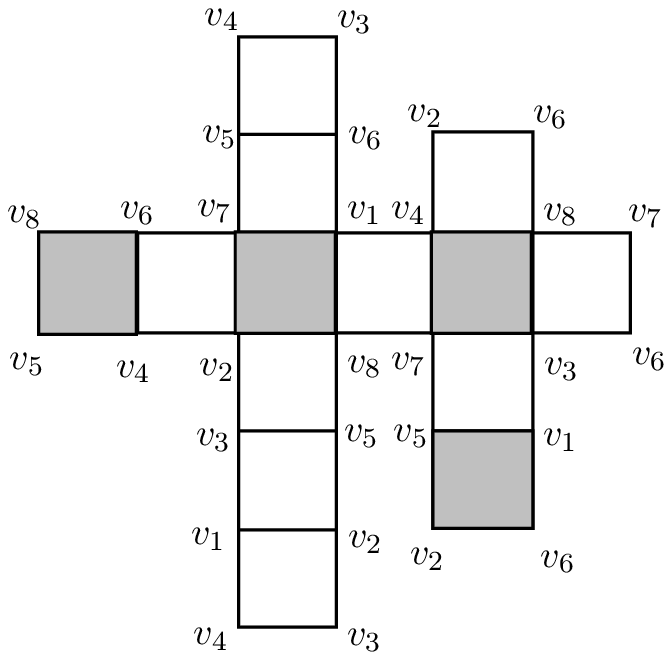} \\
  {Figure~\ref{4k8}}
  \end{center}
  \end{figure}

 \begin{inductionstep}
 Given a $\Q(n,0)$ with a perfect diagonal set, where $n=8k$, $k \ge 1$,
there exist $Q(p,t)$ for all $p \in \{n+2, n+4, n+6, n+8\}$ and $t \in
T_{p}$.  In particular, there exists a $\Q(n+8,0)$ with a perfect
diagonal set.
 \end{inductionstep}

 Suppose that $n=8k$, $k \ge 1$, and a $\Q(n,0)$, denoted by
$\Phi_n$, with a perfect diagonal set $\sD_n$ exists.
 Without loss of generality, we assume that
 $$ \sD_{n}=\{\di(v_{1},v_{2}),
	\di(v_{3},v_{4}), \dots,  \di(v_{n-1},v_{n}) \}.$$
 We construct the necessary embeddings in four stages.
 The first two handle additions of Type I in Stages 1 and 2, and the
last two handle additions of Type I in Stages 3 and 4, are setting up
squares needed for a Type IV handle addition in Stage 4.

 \begin{stage-oe}
 Suppose $p=n+2$.
 
 Since $p=n+2 \equiv 2$ (mod $8$) we have $p(p-5)
\equiv 2$ (mod $8$) and hence $\frac12 p(p-5) \equiv 1$ (mod $4$).
 Thus,
 $T_{n+2} = T_p =
 \{t \;|\; 0 \le t \le p-4,\, t \equiv 1$ (mod $4$)$\} =
 \{t \;|\; 0 \le t \le (n+2)-4,\, t \equiv 1$ (mod $4$)$\} =
 \{1, 5, \ldots, n-3\}$.
 \end{stage-oe}

 We start with $\Phi_n$.
 First employing a disc addition, we add the two vertices $v_{n+1}$ and
$v_{n+2}$ into the square with $\di(v_{1},v_{2})$ from $ \sD_{n}$ and
obtain the square $v_{n+1}v_{2}v_{n+2}v_{1}$ with a new diagonal
$\di(v_{n+1}, v_{n+2})$.
 Then, we apply $\frac{n}{2}-1$ successive handle additions of  Type I.
 During the process, the squares with $\di(v_{n+1}, v_{n+2})$ resulting
from previous additions are used as the outer squares and the squares
 with the diagonals from $\sD_{n}$ as  the inner squares.
 Moreover, the diagonals of $\sD_{n}$  are used in the order
$\di(v_{3},v_{4}), \di(v_{5},v_{6}), \dots, \di(v_{n-1}, v_{n})$.
 See Figure \ref{4kn2}.

 In our figures we often do not label vertices whose identity does not
matter, except that we use $x_1, x_2, \ldots$ to label vertices which
help to identify a square in later parts of the construction.
 Handle additions of Type I connect two diagonals using an
underlying square for each diagonal, unless otherwise specified.
 We shade squares that are \emph{reserved} for later use; these should
not be used as the inner or outer square in a handle addition until that
is explicitly specified.
 For example, the first two handle additions create reserved squares
$v_{n+1} v_{4} x_1 v_{3}$ and $v_{n+2} v_{5} x_2 v_{6}$, for use in
Stage 2 below.

 \begin{figure}[!hbtp]\refstepcounter{figure}\label{4kn2}
  \begin{center}
  \includegraphics[scale=0.8]{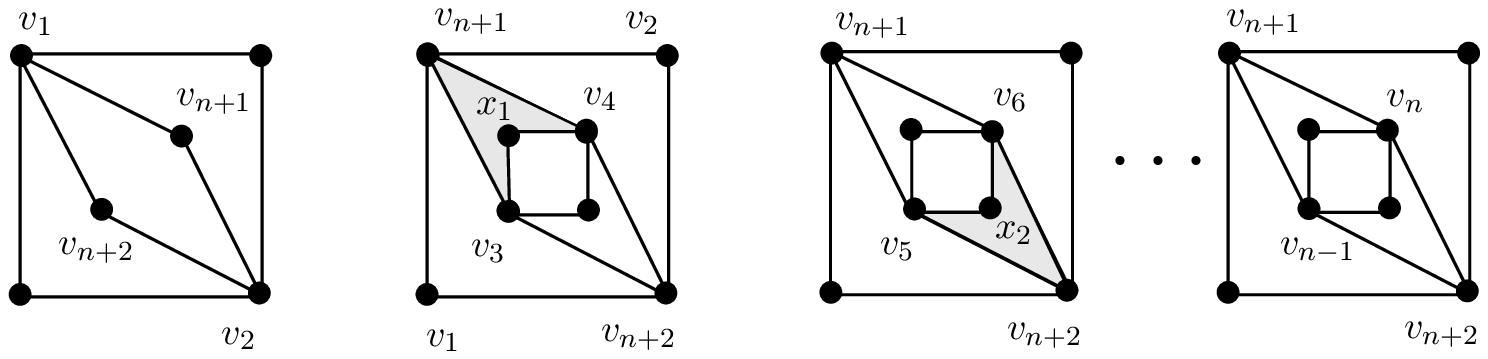} \\
  {Figure~\ref{4kn2}}
  \end{center}
  \end{figure}

 After the intial disc addition we have $\binom{n}2+4 =
\binom{n+2}2-(2n-3)$ edges.
 So this process constructs embeddings $\Q(n+2, t)$ for $t = 2n-3, 2n-7,
2n-11, \ldots, n-3, \ldots, 5, 1$, which includes all values $t \in
T_{n+2}$.
 Since the disc and handle additions join $v_{n+1}$ and $v_{n+2}$ to all of $v_1,
v_2, \dots, v_n$, but do not provide an edge $v_{n+1}v_{n+2}$, the final
result $\Phi_{n+2}$ is an embedding of $K_{n+2}-E_{1}$ where
$E_{1}=\{v_{n+1} v_{n+2}\}$.  It has a perfect diagonal set
 $$ \sD_{n+2}= \{ \di(v_1, v_2),
  \diq(v_3, v_4, v_{n+1} v_{4} x_1 v_{3}),
  \diq(v_5, v_6, v_{n+2} v_{5} x_2 v_{6}),
  \di(v_7, v_8),
  \di(v_9, v_{10}),
  \ldots, \di(v_{n+1}, v_{n+2}) \}. $$

 \begin{stage-oe}
 Suppose $p=n+4$.  Since $n+4 \equiv 4$ (mod $8$) we have
 $T_{n+4} = \{t \;|\; 0 \le t \le (n+4)-4,\, t \equiv 2$ (mod $4$)$\} =
\{2, 6, \ldots, n-2\}$.
 \end{stage-oe}

 Similarly to Stage 1, starting from $\Phi_{n+2}$ with $\sD_{n+2}$ we
can employ a disc addition to add vertices $v_{n+3}$ and $v_{n+4}$ and
obtain a diagonal $\di(v_{n+3}, v_{n+4})$, then
employ $\frac{n}{2}$ handle additions of Type I.
 The first two handle additions use the reserved squares
from Stage 1 as inner squares.  They create new reserved squares with
new diagonals, for use in Stage 3 below.
 See Figure \ref{4kn4}.
 After the initial disc addition we have $\binom{n+2}2+3 =
\binom{n+4}2-(2n+2)$ edges.
 So this process creates embeddings $\Q(n+4,t)$ for $t = 2n+2, 2n-2,
2n-6, \ldots, n-2, \ldots, 6, 2$, which includes all $t \in T_{n+4}$.

 \begin{figure}[!hbtp]\refstepcounter{figure}\label{4kn4}
  \begin{center}
  \includegraphics[scale=0.8]{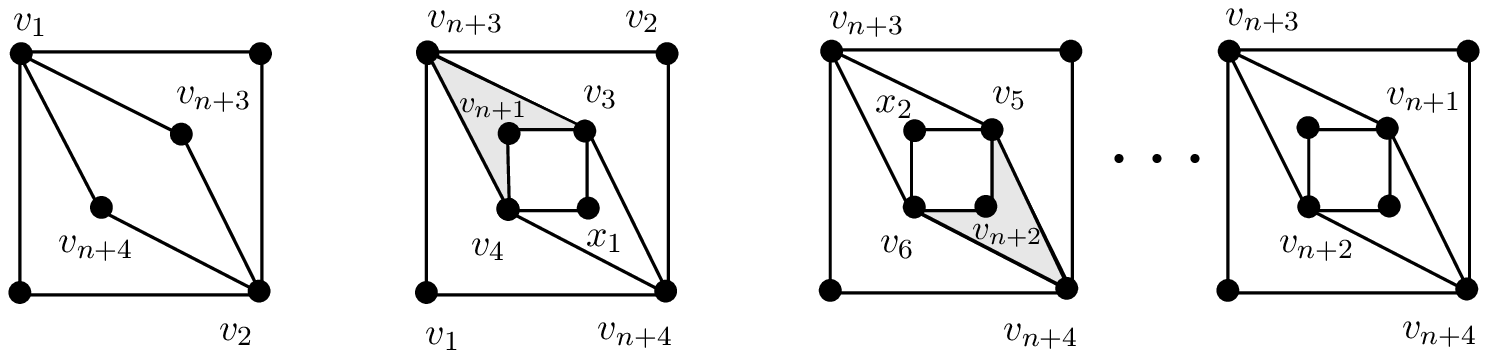} \\
  {Figure~\ref{4kn4}}
  \end{center}
  \end{figure}

 The final result $\Phi_{n+4}$ is a quadrangular embedding of
$K_{n+4}-E_{2}$ where $E_{2}=\{v_{n+1} v_{n+2},\ab  v_{n+3} v_{n+4}
\}$.  It has a perfect diagonal set
 \begin{align*}
 \sD_{n+4} &= \{ \di(v_1, v_2),
 \di(v_3, v_4), \di(v_5, v_6), \ldots, \di(v_{n-1}, v_n), \\
 & \qquad\qquad \diq(v_{n+1}, v_{n+3}, v_{n+1} v_4 v_{n+3} v_3),
 \diq(v_{n+2}, v_{n+4}, v_{n+2} v_5 v_{n+4} v_6)
 \}
 \end{align*}
 where $v_{n+1} v_4 v_{n+3} v_3$ and $v_{n+2} v_5 v_{n+4} v_6$
are the two reserved squares.

 \begin{stage-oe} Suppose $p=n+6$.
 Since $n+6 \equiv 6$ (mod $8$) we have $T_{n+6} = \{t \;|\; 0 \le t \le
(n+6)-4,\, t \equiv 3$ (mod $4$)$\} = \{3, 7, \ldots, n-1\}$.
 \end{stage-oe}

 Similarly to Stages 1 and 2, from $\Phi_{n+4}$ with $\sD_{n+4}$ we can
employ a disc addition to add vertices $v_{n+5}$ and $v_{n+6}$, and then
$\frac n2 +1$ handle additions of Type I.
 The last two handle additions create reserved squares for use in Stage 4
below.
 See Figure \ref{4kn6}.
 After the initial disc addition we have $\binom{n+4}2+2 =
\binom{n+6}2-(2n+7)$ edges.
 So this process creates embeddings $\Q(n+6,t)$ for $t = 2n+7, 2n+3,
2n-1, \ldots, n-1, \ldots, 7, 3$, which includes all $t \in T_{n+6}$.

 \begin{figure}[!hbtp]\refstepcounter{figure}\label{4kn6}
  \begin{center}
  \includegraphics[scale=0.8]{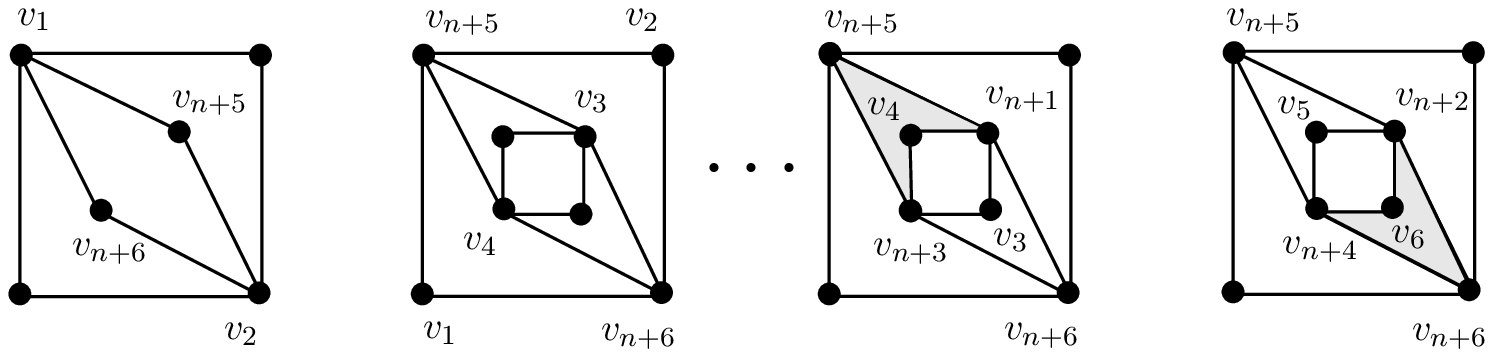} \\
  {Figure~\ref{4kn6}}
  \end{center}
  \end{figure}

 The final result $\Phi_{n+6}$ is a quadrangular embedding of 
$K_{n+6}-E_{3}$ where
 $E_{3}=\{ v_{n+1} v_{n+2},\ab v_{n+3} v_{n+4},\ab v_{n+5} v_{n+6}\}$.
 It has a perfect diagonal set
 \begin{align*}
 \sD_{n+6} &= \{
    \di(v_1,v_2), \di(v_3, v_4), \ldots, \di(v_{n-1}, v_n), \\
    &\qquad\qquad \diq(v_{n+1}, v_{n+3}, v_{n+1} v_{4} v_{n+3} v_{n+5}),
	\diq(v_{n+2}, v_{n+4}, v_{n+2} v_{n+6} v_{n+4} v_{6}),
	\di(v_{n+5}, v_{n+6}) \}
 \end{align*}
 where $v_{n+1} v_{4} v_{n+3} v_{n+5}$ and $v_{n+2} v_{n+6}
v_{n+4} v_{6}$ are the two reserved squares.

 \begin{stage-oe} Suppose $p=n+8$.
 Since $n+8 \equiv 0$ (mod $8$) we have $T_{n+8} = \{t \;|\; 0 \le t \le
(n+8)-4,\, t \equiv 0$ (mod $4$)$\} = \{0, 4, \ldots, n+4\}$.
 \end{stage-oe}

 Similarly to the previous stages, from $\Phi_{n+6}$ with $\sD_{n+6}$ we can
employ a disc addition to add vertices $v_{n+7}$ and $v_{n+8}$, and then
$\frac n2 +2$ handle additions of Type I.
 The last two handle additions of Type I create reserved squares
 $v_{n+1}v_{n+7}v_{n+3}v_{n+5}$ and
 $v_{n+2}v_{n+6}v_{n+4}v_{n+8}$, which we then use for a Type IV handle
addition.
 See Figure \ref{4kn8}.
 After the initial disc addition we have $\binom{n+6}2+1 =
\binom{n+8}2-(2n+12)$ edges.
 So this process creates embeddings $\Q(n+6,t)$ for $t = 2n+12, 2n+8,
2n+4, \ldots, n+4, \ldots, 4, 0$, which
includes all $t \in T_{n+8}$.

 \begin{figure}[!hbtp]\refstepcounter{figure}\label{4kn8}
  \begin{center}
  \includegraphics[scale=0.9]{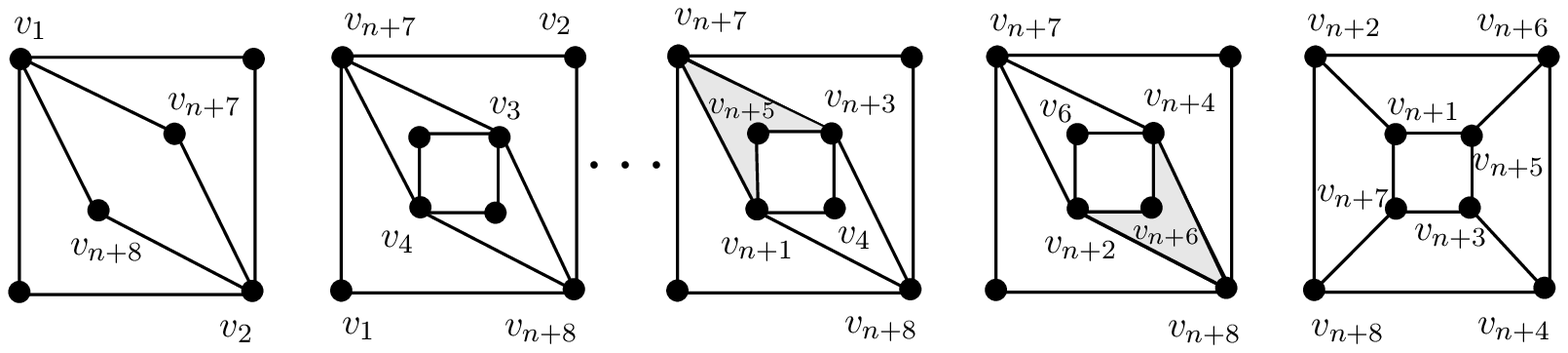} \\
  {Figure~\ref{4kn8}}
  \end{center}
  \end{figure}

 The final result $\Phi_{n+8}$ is a quadrangular embedding of $K_{n+8}$.
It is a $\Q(n+8,0)$ with perfect diagonal set
 \begin{align*}
 \sD_{n+8} &= \{ \di(v_1, v_2), \di(v_3, v_4), \ldots,
	\di(v_{n-1}, v_n), \\
	&\qquad\qquad \di(v_{n+1}, v_{n+3}), 
	\di(v_{n+2}, v_{n+4}), \di(v_{n+5}, v_{n+6}), 
	\di(v_{n+7}, v_{n+8})\}.
 \end{align*}

 This completes the proof of the induction step.
 Now the small cases ($n=4$ and $6$), the basis, and the induction step
together imply Lemma \ref{orieven}.
 \end{proof}


 \begin{lemma}\label{oriodd}
 There exists a $\Q(n,t)$ for each odd $n \ge 5$ and $t \in T_n$.
 \end{lemma}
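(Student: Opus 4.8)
The plan is to mirror the inductive scheme of Lemma~\ref{orieven}, building embeddings eight vertices at a time, but replacing \emph{perfect} diagonal sets by \emph{nearly-perfect} ones: a perfect diagonal set pairs up every vertex, so it can exist only when the order is even, and for odd $n$ exactly one vertex, say $z$, must be left uncovered. Accordingly I would induct through the residue class $n \equiv 5 \pmod 8$, for which Theorem~\ref{complete} supplies an orientable quadrangulation $\Q(n,0)=K_n$. The base case is the smallest such embedding (either $K_5$ on the torus, with the very small orders $n\in\{7,9,11\}$ treated alongside it, or $K_{13}$ with $n\in\{5,7,9,11\}$ done by hand), equipped with an explicit $z$-nearly-perfect diagonal set. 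The induction step assumes a $\Q(n,0)=K_n$ with $n\equiv 5\pmod 8$ and a $z$-nearly-perfect diagonal set $\sD_n$, and produces $\Q(p,t)$ for all $p\in\{n+2,n+4,n+6,n+8\}$ and all $t\in T_p$, finishing with a $\Q(n+8,0)=K_{n+8}$ that again carries a $z$-nearly-perfect diagonal set, so that the induction closes.

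Each of the four stages begins, exactly as in Lemma~\ref{orieven}, with a disc addition that inserts the two new vertices $v_{p-1},v_{p}$ into a square of $\sD$ and creates the diagonal $\di(v_{p-1},v_{p})$, followed by a run of Type~I handle additions that attach $v_{p-1}$ and $v_p$ to the existing diagonal pairs. Because $\sD$ covers every vertex except $z$, these operations join $v_{p-1},v_p$ to all old vertices \emph{except} $z$, and never add the edge $v_{p-1}v_p$ itself. A short computation with (\ref{quadeq}) shows the residues $\tfrac12 p(p-5)\bmod 4$ run through $3,2,1,0$ over the four stages, so the target minimal values of $t$ are $3,2,1,0$; the disc additions start each stage at $t=2n+O(1)$ and the Type~I handle additions step $t$ down by $4$, producing the full arithmetic progression and thereby every $t\in T_p$ down to $t=6,5,4$ in Stages~2, 3, 4 respectively (and exactly to $t=3$ in Stage~1).

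To close the gap between these Type~I endpoints and the required minima $2,1,0$, I would finish Stages~2, 3 and~4 with a single special handle addition (of Type~II, III or~IV, in orientable mode), each adding the four edges that Type~I additions structurally cannot supply. Tracking the deficit shows that, across the whole step, precisely twelve edges are missing from $K_{n+8}$: the eight edges $z v_{n+1},\dots,z v_{n+8}$ joining $z$ to the new vertices, together with the four ``pair-edges'' $v_{n+1}v_{n+2}$, $v_{n+3}v_{n+4}$, $v_{n+5}v_{n+6}$, $v_{n+7}v_{n+8}$. One special handle addition per stage removes four of these, bringing $t$ to $2$, then $1$, then $0$, and leaving $K_{n+8}$ complete at the end of Stage~4. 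As with the Type~IV addition of Lemma~\ref{orieven}, realizing each special addition requires that the two squares it uses already contain the right vertices (now including $z$), so earlier Type~I additions must \emph{reserve} suitable squares; I would carry these reserved squares through the stages as in the even case and check that the final diagonal set, after re-pairing the eight new vertices into four diagonals, is again $z$-nearly-perfect.

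The main obstacle is precisely the uncovered vertex $z$. A perfect diagonal set let every Type~I handle in Lemma~\ref{orieven} attach a full pair, so the only leftover edges were the four pair-edges, cleared by one Type~IV addition; here $z$ is omitted from every handle addition, so its eight incident edges to the new vertices must be inserted by the limited special operations, and the reserved-square bookkeeping is correspondingly heavier. The delicate points are to schedule the three special handle additions and the reserved squares so that (i) no edge is ever duplicated, keeping the graph simple, (ii) every face remains a quadrilateral, (iii) orientability is preserved at each handle, (iv) all of $T_p$ is hit in every stage, and (v) the construction terminates with a $z$-nearly-perfect diagonal set feeding the next step. The counting is tight for small $n$ --- in Stage~1 the disc addition together with the $\tfrac{n-3}{2}$ Type~I additions use all $\tfrac{n-1}{2}$ diagonals of $\sD_n$ --- so the smallest orders may require separate verification.
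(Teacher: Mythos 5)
Your global architecture is the same as the paper's (induct through $n \equiv 5 \pmod 8$ starting from $K_5$, four stages, each stage a disc addition plus a run of Type~I handle additions with special handle additions at the ends of later stages, supported by reserved squares), but your induction invariant --- a single $z$-nearly-perfect diagonal set, with one vertex $z$ omitted throughout --- is fatally flawed. The obstruction is geometric: in any handle addition all four resulting faces on the handle must be squares, and this forces at most \emph{two} of the four new edges to be incident with any single vertex. (If a vertex $z$ of the outer square carries $k \ge 2$ spokes, then between consecutive spokes ending at inner vertices $u_i,u_j$ there is a face of degree $2+d$, where $d$ is the distance from $u_i$ to $u_j$ along the inner square; for all faces to be squares each such gap must equal $2$, and with $k\ge 3$ spokes the gaps would sum to at least $6 > 4$.) In your scheme all eight edges $zv_{n+1},\dots,zv_{n+8}$ must be supplied by the three special additions, but they can carry at most $2$ edges at $z$ apiece, i.e.\ at most $6$ in total. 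So at the end of Stage~4 at least two edges at $z$ are still missing, $t=0$ (the quadrangulation of $K_{n+8}$) is unreachable, and the induction cannot close; already your Stage~2 target, which on your own accounting needs the four edges $zv_{n+1},zv_{n+2},zv_{n+3},zv_{n+4}$ from one handle, is unrealizable.

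The paper's proof avoids exactly this bottleneck by maintaining a \emph{full} diagonal set containing \emph{two} nearly-perfect subsets that omit two \emph{different} vertices, $v_1$ and $v_2$ (the $K_5$ base embedding supplies both), and alternating them: Stage~1 attaches $v_{n+1},v_{n+2}$ along the $v_2$-nearly-perfect subset, so the deficit is $\{v_2v_{n+1},v_2v_{n+2},v_{n+1}v_{n+2}\}$, exactly matching $\min T_{n+2}=3$ with no special addition at all; Stages~2--4 attach the new pairs along $v_1$-nearly-perfect subsets, and the special additions then each need only two edges at any one omitted vertex --- a Type~II addition ending Stage~2 adds $v_2v_{n+1},v_2v_{n+2},v_1v_{n+3},v_1v_{n+4}$, and Type~III additions ending Stages~3 and~4 each add two $v_1$-edges plus two of the pair-edges $v_{n+i}v_{n+i+1}$. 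This distributes the twelve deficit edges so that the two-edges-per-vertex cap is respected, which is the idea your proposal is missing. (Two smaller discrepancies: the paper needs only Types~II and~III here, not Type~IV, and its basis is just $K_5$ --- no separate hand-verification of $n=7,9,11$ is needed, since the induction step from $n=5$ produces them.)
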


 \begin{proof}
 We proceed inductively.

 \begin{basis}
 There exist $\Q(5,t)$ for all $t \in T_5$.  In particular, there exists
a $\Q(5,0)$ with a full diagonal set $\sD_5$, which contains
$v_1$-nearly-perfect and $v_2$-nearly-perfect subsets.
 \end{basis}

 We have $T_5 = \{0\}$ so we only need to find the specified embedding
$\Q(5,0)$.
 We use the embedding $\Phi_{5}$ of $K_5$ in $S_{1}$ with five squares, shown
in Figure \ref{k5}.  It has a full diagonal set
 \begin{align*}
  \sD_5 &= \{
	\diq(v_1, v_5, v_1 v_4 v_5 v_2),
	\diq(v_3, v_4, v_3 v_5 v_4 v_2),
	\diq(v_4, v_5, v_4 v_1 v_5 v_3),
	\diq(v_2, v_3, v_2 v_4 v_3 v_1)
 \}
 \end{align*}
 where the first two elements form a $v_2$-nearly-perfect subset and the
last two elements form a $v_1$-nearly-perfect subset.

 \begin{figure}[!hbtp]\refstepcounter{figure}\label{k5}
  \begin{center}
  \includegraphics[scale=0.8]{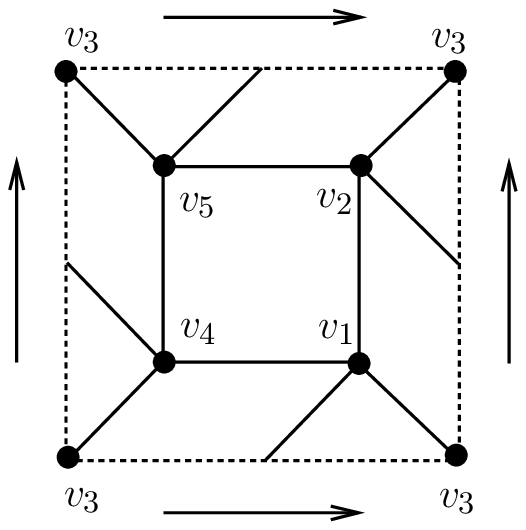} \\
  {Figure~\ref{k5} }
  \end{center}
  \end{figure}

 \begin{inductionstep}
 Suppose $n=8k+5$, $k \ge 0$, and we are given a $\Q(n,0)$ with a full
diagonal set having $v_1$-nearly-perfect and $v_2$-nearly-perfect
subsets.  Then  there exist $Q(p,t)$ for all $p \in \{n+2, n+4, n+6,
n+8\}$ and $t \in T_p$.  In particular, there exists a $\Q(n+8,0)$ with 
a full diagonal set having $v_1$-nearly-perfect and $v_2$-nearly-perfect
subsets.
 \end{inductionstep}

 Suppose that $n=8k+5$, $k \ge 0$, and there is a $\Q(n,0)$, denoted by
$\Phi_n$, with a full diagonal set $\sD_n$, as described.  Write the
$v_2$-nearly-perfect and $v_1$-nearly-perfect subsets as
 \begin{align*}
 %
  \sD'_n &= \{ \di(y_1, y_2), \di(y_3, y_4), \dots,
	\di(y_{n-4}, y_{n-3}), \di(y_{n-2}, v_1) \} \hbox{ and}\\
  \sD''_n &= \{ \di(z_1, z_2), \di(z_3, z_4), \dots, 
	\di(z_{n-4}, z_{n-3}), \di(z_{n-2}, v_2) \},
 \end{align*}
 respectively.
 Thus, $\{y_1, y_2, \dots, y_{n-2}\} = \{z_1, z_2, \dots, z_{n-2}\} =
\{v_3, v_4, \dots, v_n\}$.
 We construct the necessary embeddings in four stages.
 Note that the last few handle additions in each stage help to set up
squares needed for the handle additions of Type II and III in later
stages.


 \begin{stage-oo}
 Suppose $p=n+2$.  Since $n+2 \equiv 7$ (mod $8$) we have
 $T_{n+2} = \{t \;|\; 0 \le t \le (n+2)-4,\, t \equiv 3$ (mod $4$)$\}
	= \{3, 7, \dots, n-2\}$.
 \end{stage-oo}

  We start with $\Phi_n$.  First, by employing a disc addition, we add
two vertices $v_{n+1}$ and $v_{n+2}$ into the square with $\di(y_1,y_2)$
from $\sD_{n}$ and obtain the square $v_{n+1}y_2v_{n+2}y_1$ with
$\di(v_{n+1}, v_{n+2})$.
 Then, we apply $\frac{n-3}{2}$ successive handle additions of Type I.
 During the process, the squares with $\di(v_{n+1}, v_{n+2})$ resulting
from previous additions are used as the outer squares and the squares
 with the diagonals from $\sD'_{n}$, in the order $\di(y_3,y_4),
\di(y_5,y_6), \dots, \di(y_{n-2},v_1)$, as the inner squares.
 The final handle addition creates a square $v_{n+1} x_1
v_{n+2} v_1$ that is reserved for Stage 2 below.
 See Figure \ref{3kn2}.

 \begin{figure}[!hbtp]\refstepcounter{figure}\label{3kn2}
  \begin{center}
  \includegraphics[scale=0.9]{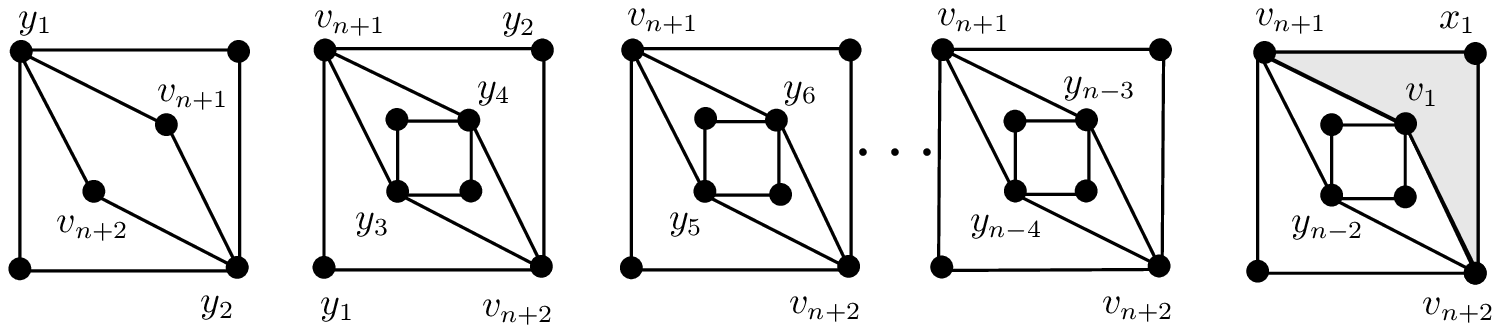} \\
  {Figure~\ref{3kn2} }
  \end{center}
  \end{figure}

 After the intial disc addition we have $\binom{n}2+4 =
\binom{n+2}2-(2n-3)$ edges.
 So this process constructs embeddings $\Q(n+2, t)$ for $t = 2n-3, 2n-7,
2n-11, \ldots, n-2, \ldots, 7, 3$, which includes all values $t \in
T_{n+2}$.
 Since the disc and handle additions join $v_{n+1}$ and
$v_{n+2}$ to all of $v_3, v_4, v_5, \dots, v_n$ and to $v_1$, but not to
$v_2$, and do not provide an edge $v_{n+1} v_{n+2}$, the final result
$\Phi_{n+2}$ is an embedding of $K_{n+2}-E_{3}$ where $E_{3}=\{v_2
v_{n+1}, v_2 v_{n+2}, v_{n+1} v_{n+2}\}$.  It has a full diagonal set 
$\sD_{n+2}= \sD_n \cup \{\di(v_{n+1}, v_{n+2})\}$ with
$v_2$-nearly-perfect and $v_1$-nearly perfect subsets
 $$\sD'_{n+2}= \sD'_n \cup \{\di(v_{n+1}, v_{n+2})\}
	\hbox{\quad and\quad}
   \sD''_{n+2}= \sD''_n \cup \{\di(v_{n+1}, v_{n+2})\},$$
 respectively.

 \begin{stage-oo}
 Suppose $p=n+4$.  Since $n+4 \equiv 1$ (mod $8$) we have
 $T_{n+4} = \{t \;|\; 0 \le t \le (n+4)-4,\, t \equiv 2$ (mod $4$)$\} =
\{2, 6, \ldots, n-3\}$.
 \end{stage-oo} 

 Starting from $\Phi_{n+2}$ with $\sD''_{n+2}$ we can employ a disc
addition to add vertices $v_{n+3}$ and $v_{n+4}$, then $\frac{n-1}{2}$
handle additions of Type I.  The second last of these creates a reserved
square, which is used in a final handle addition of Type II, along with
the reserved square from Stage 1.  This creates two reserved squares which
provide new diagonals with specific underlying squares, for use in Stage
3 below.
 See Figure \ref{3kn4}.
 After the initial disc addition we have $\binom{n+2}2+1 =
\binom{n+4}2-(2n+4)$ edges.
 So this process creates embeddings $\Q(n+4,t)$ for $t = 2n+4, 2n,
2n-4, \ldots, n-3, \ldots, 6, 2$, which includes all $t \in T_{n+4}$.

 \begin{figure}[!hbtp]\refstepcounter{figure}\label{3kn4}
  \begin{center}
  \includegraphics[scale=0.9]{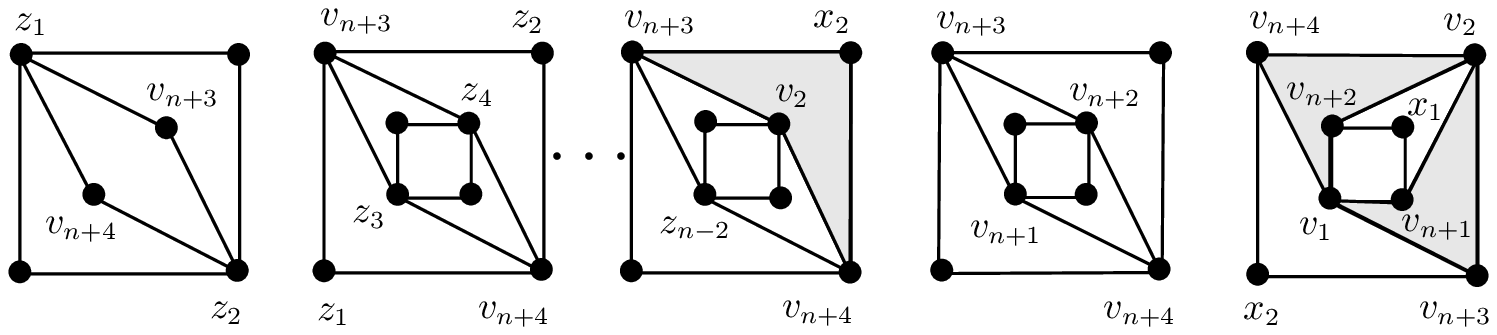} \\
  {Figure~\ref{3kn4}}
  \end{center}
  \end{figure}

 The final result $\Phi_{n+4}$ is a quadrangular embedding of
$K_{n+4}-E_{2}$ where $E_{2}=\{v_{n+1} v_{n+2},\ab v_{n+3} v_{n+4}
\}$.
 If we define a diagonal set using the last two reserved squares, i.e.,
 $$\sD^+_{n+4} = \{\diq(v_{n+1}, v_{n+3}, v_{n+1} v_2 v_{n+3} v_1),
	\diq(v_{n+2}, v_{n+4}, v_{n+2} v_1 v_{n+4} v_2)\}$$
 then $\Phi_{n+4}$ has a full diagonal set $\sD_{n+4} = \sD_n \cup
\sD^+_{n+4}$ with $v_2$-nearly-perfect and $v_1$-nearly-perfect subsets
$\sD'_{n+4} = \sD'_n \cup \sD^+_{n+4}$ and
$\sD''_{n+4} = \sD''_n \cup \sD^+_{n+4}$, respectively.

 \begin{stage-oo}
 Suppose $p=n+6$.  Since $n+6 \equiv 3$ (mod $8$) we have
 $T_{n+6} = \{t \;|\; 0 \le t \le
(n+6)-4,\, t \equiv 1$ (mod $4$)$\} = \{1, 5, \ldots, n\}$.
 \end{stage-oo} 

 Starting from $\Phi_{n+4}$ with $\sD''_{n+4}$, we can employ a disc
addition to add vertices $v_{n+5}$ and $v_{n+6}$, then $\frac{n+1}2$
handle additions of Type I, then a handle addition of Type III.
 See Figure \ref{3kn6}.
 The last four handle additions create and use up a number of reserved
squares; the net effect is that the two reserved squares from the Type
II addition in Stage 2 are used up, and two new reserved squares 
 $v_{n+1} v_{n+5} v_{n+3} v_1$ and $v_{n+2} v_{n+6} v_{n+4} v_2$ are
created for use in Stage 4 below.
 After the initial disc addition we have $\binom{n+4}2+2 =
\binom{n+6}2-(2n+7)$ edges.
 So this process creates embeddings $\Q(n+6,t)$ for $t=2n+7, 2n+3,
\dots, n, \dots, 5, 1$, which includes all $t \in T_{n+6}$.

 \begin{figure}[!hbtp]\refstepcounter{figure}\label{3kn6}
  \begin{center}
  \includegraphics[scale=0.9]{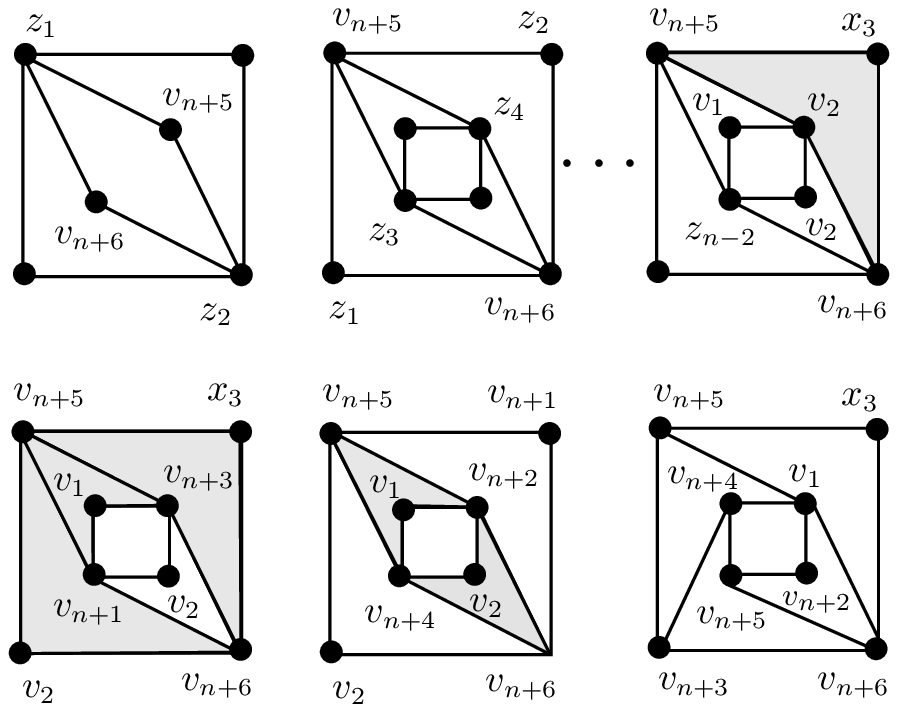} \\
  {Figure~\ref{3kn6}}
  \end{center}
  \end{figure}

 The final result $\Phi_{n+6}$ is a quadrangular embedding of
$K_{n+6}-E_{1}$ where $E_{1}=\{v_{n+1} v_{n+2}\}$.
 If we define a diagonal set containing the two unused reserved squares,
 $$\sD^+_{n+6} = \{
	\diq(v_{n+1}, v_{n+3}, v_{n+1} v_{n+5} v_{n+3} v_1),
	\diq(v_{n+2}, v_{n+4}, v_{n+2} v_{n+6} v_{n+4} v_2),
 	\di(v_{n+5}, v_{n+6})
	\}$$
 then $\Phi_{n+6}$ has a full diagonal set $\sD_{n+6} = \sD_n \cup
\sD^+_{n+6}$ with $v_2$-nearly-perfect and $v_1$-nearly-perfect subsets
$\sD'_{n+6} = \sD'_n \cup \sD^+_{n+6}$ and
$\sD''_{n+6} = \sD''_n \cup \sD^+_{n+6}$, respectively.

 \begin{stage-oo}
 Suppose $p=n+8$.  Since $n+8 \equiv 5$ (mod $8$) we have
 $T_{n+8} = \{t \;|\; 0 \le t \le
(n+8)-4,\, t \equiv 0$ (mod $4$)$\} = \{0, 4, \ldots, n+3\}$.
 \end{stage-oo} 

 Starting from $\Phi_{n+6}$ with $\sD''_{n+6}$, we can employ a disc
addition to add vertices $v_{n+7}$ and $v_{n+8}$.  Then we use
$\frac{n+3}2$ handle additions of Type I using diagonals in the order
$\di(z_1,z_2)$, $\di(z_3, z_4)$, $\dots$, $\di(z_{n-2}, v_2)$,
$\di(v_{n+5}, v_{n+6})$ and lastly, using the two reserved squares from
Stage 3, $\di(v_{n+1}, v_{n+3})$ and $\di(v_{n+2}, v_{n+4})$.
 We finish with a handle addition of Type III.
 See Figure \ref{3kn8}.
 The last three handle additions create and use up three additional
reserved squares.
 After the initial disc addition we have $\binom{n+6}2+3 =
\binom{n+8}2-(2n+10)$ edges.
 So this process creates embeddings $\Q(n+8,t)$ for $t=2n+10, 2n+6,
\dots, n+3, \dots, 4, 0$, which includes all $t \in T_{n+8}$.

 \begin{figure}[!hbtp]\refstepcounter{figure}\label{3kn8}
  \begin{center}
  \includegraphics[scale=0.9]{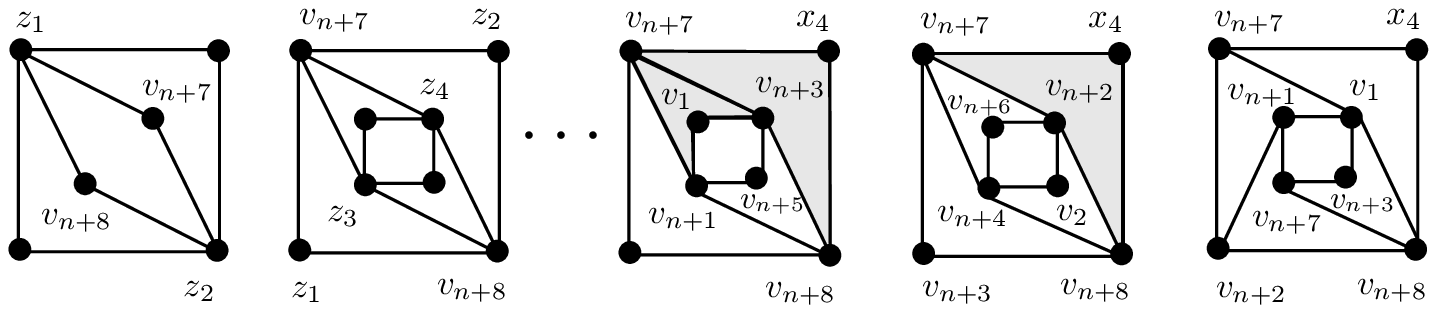} \\
  {Figure~\ref{3kn8}}
  \end{center}
  \end{figure}

 The final result $\Phi_{n+8}$ is a quadrangular embedding of
$K_{n+8}$.
 If we define a diagonal set
 $$\sD^+_{n+8} = \{
	\di(v_{n+1}, v_{n+3}), \di(v_{n+2}, v_{n+4}),
 	\di(v_{n+5}, v_{n+6}), \di(v_{n+7}, v_{n+8})
 \}$$
 then $\Phi_{n+8}$ is a $\Q(n+8,0)$ with a full diagonal set $\sD_{n+8}
= \sD_n \cup \sD^+_{n+8}$ having $v_2$-nearly-perfect and
$v_1$-nearly-perfect subsets $\sD'_n \cup \sD^+_{n+8}$ and $\sD''_n \cup
\sD^+_{n+8}$, respectively.

 This completes the proof of the induction step.
 Now the basis and the induction step together imply Lemma \ref{oriodd}.
 \end{proof}

 \subsection*{Nonorientable surfaces}

 Let $\nQ(n,t)$ denote a nonorientable quadrangular embedding of a simple
graph with $n$ vertices and $\binom{n}2-t$ edges, for any integers $n
\ge 1$ and $t$ with $0 \le t \le \binom{n}2$.
 Let $\nT_n = \{t \;|\; 0 \le t \le n-4,\, t \equiv \frac12 n(n-5)$ (mod
$2$)$\}$ for each integer $n \ge 4$.
 The elements of $\nT_n$ form an arithmetic progression with difference
$2$.
 The condition $t \equiv \frac12 n(n-5)$ (mod $2$) is equivalent to $4
\,|\, n(n-5)-2t$ and so we must consider the value of $n \bmod 4$ to
determine $\nT_n$.
 We must show that 
there exist $\nQ(n, t)$ for each $n \ge 4$ and
$t \in \nT_n$ except when $(n,t)=(5,0)$, and that there exists a
$\nQ(6,3)$ in $N_2$.
 We divide the proof into the cases where $n \le 6$, where $n \ge 8$ is
even, and where $n \ge 7$ is odd.

 When working with nonorientable surfaces we may use a square $v_i v_j
v_k v_\ell$ in its reverse order $v_i v_\ell v_k v_j$ whenever
convenient.

 \begin{lemma}\label{nonorismall}
 Suppose that $4 \le n \le 6$.  Then there exist $\nQ(n,t)$ for each $t
\in \nT_n$, except when $(n,t)=(5,0)$.  There also exists a $\nQ(6,3)$
in $N_2$.
 \end{lemma}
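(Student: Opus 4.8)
The plan is to reduce Lemma~\ref{nonorismall} to three explicit small embeddings. From the definition $\nT_n=\{t: 0\le t\le n-4,\ t\equiv\frac12 n(n-5)\ (\mathrm{mod}\ 2)\}$ one computes $\nT_4=\{0\}$, $\nT_5=\{0\}$ and $\nT_6=\{1\}$. Since the statement excludes $(n,t)=(5,0)$, the case $n=5$ contributes nothing, so it remains to construct a $\nQ(4,0)$, a $\nQ(6,1)$, and the separate $\nQ(6,3)$. Equation~(\ref{quadeq}) fixes the host surface in each case. A $\nQ(4,0)$ is a quadrangular embedding of $K_4$ ($6$ edges, $3$ faces) with $\chi=1$, i.e.\ in the projective plane $N_1$; a $\nQ(6,3)$ is a quadrangulation of the octahedron $O_6$, that is $K_6$ minus a perfect matching ($12$ edges, $6$ faces), with $\chi=0$, i.e.\ in the Klein bottle $N_2$; and a $\nQ(6,1)$ is a quadrangulation of $K_6-e$ ($14$ edges, $7$ faces) with $\chi=-1$, i.e.\ in $N_3$.

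First I would dispose of $\nQ(4,0)$. The graph $K_4$ has exactly three $4$-cycles, $v_1v_2v_3v_4$, $v_1v_2v_4v_3$ and $v_1v_3v_2v_4$, and a direct check shows each of the six edges lies on exactly two of them; hence they are the faces of a quadrangular embedding. Euler's formula gives $\chi=4-6+3=1$, and since $\chi$ is odd the surface is nonorientable, so this is the desired embedding in $N_1$.

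Next I would exhibit the two six-vertex embeddings explicitly, by a figure giving a fundamental polygon (equivalently, a rotation system together with edge signs), and verify directly that every face is a $4$-cycle and that $\chi$ and the orientability are as claimed. For $\nQ(6,3)$ this is the minimal quadrangulation of the Klein bottle underlying the value $n(N_2)=6$ in Theorem~\ref{order}. I would arrange this octahedral embedding to contain a face $v_iv_jv_kv_\ell$ whose two diagonals $\di(v_i,v_k)$ and $\di(v_j,v_\ell)$ are two of the three matching edges absent from $O_6$; such a \emph{rectangular} face, one alternating between two of the three parts of the complete tripartite graph $O_6=K_{2,2,2}$, automatically has both diagonals lying within a part and hence non-adjacent. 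A single crosscap addition (Figure~\ref{crosscap}) on that face then inserts exactly those two absent edges as new diagonals through the crosscap, turning $O_6$ into $K_6-e$ while lowering $\chi$ from $0$ to $-1$; the result is the required $\nQ(6,1)$ in $N_3$. (Alternatively one may simply exhibit a $\nQ(6,1)$ directly.)

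The arithmetic above is routine; the real work, and the main obstacle, is certifying the two six-vertex embeddings. One must check that the prescribed faces glue into an honest closed surface---that the face-corners tile the rotation at each vertex exactly once---that all faces are nondegenerate $4$-cycles, and, crucially, that the surface is nonorientable with the stated Euler characteristic rather than the orientable surface of the same $\chi$. For the crosscap shortcut there is the extra task of ensuring the octahedral embedding genuinely has a rectangular face as an actual face, which I would build in from the start. Once these small embeddings are pinned down and verified, the lemma follows; note that the excluded pair $(5,0)$ is precisely the nonorientable case of $K_5$ forbidden by Theorem~\ref{complete}, so the exception is exactly as expected.
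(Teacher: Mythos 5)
Your reduction of the lemma to three explicit embeddings (a $\nQ(4,0)$ in $N_1$, a $\nQ(6,1)$ in $N_3$, and a $\nQ(6,3)$ in $N_2$), and your treatment of $K_4$, match the paper's proof in outline; the paper simply exhibits the two six-vertex embeddings by figures, with its $\nQ(6,1)$ being $K_6-\{v_5v_6\}$ in $N_3$ and its $\nQ(6,3)$ being $K_6-\{v_1v_3,v_2v_6,v_3v_4\}$ in $N_2$. Note that the paper's deleted triple is \emph{not} a perfect matching, and this is a warning sign for your main route. Your plan for $\nQ(6,1)$ hinges on an octahedral quadrangulation of the Klein bottle possessing a ``rectangular'' face, i.e.\ a face whose two diagonals are both missing matching edges, which you say you ``would build in from the start.'' This is precisely where the argument breaks: no such embedding exists. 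Since $O_6$ is $4$-regular, any quadrangular embedding of it in a $\chi=0$ surface lifts, via the universal cover, to the square-grid quadrangulation of the plane, so Klein-bottle quadrangulations of $O_6$ are exactly quotients of the grid by a fixed-point-free group generated by a glide reflection and a translation. Working through the finitely many such quotients with six vertex orbits, exactly one yields a simple graph; it does realize $O_6$ (so your claim that $O_6$ quadrangulates $N_2$ is true, and could serve as the $\nQ(6,3)$), but in it \emph{every} face is of mixed type $a\,b\,a'\,c$: one diagonal is the non-edge $aa'$, while the other diagonal $bc$ is already an edge of $O_6$. Consequently a crosscap addition on any face would insert an edge parallel to an existing one, destroying simplicity, and there is no face on which your construction can be performed. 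The derivation of $\nQ(6,1)$ therefore fails as proposed.

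Your parenthetical fallback --- ``exhibit a $\nQ(6,1)$ directly'' --- is exactly what the paper does, but you never produce the embedding, and you likewise defer the verification of the $\nQ(6,3)$; you candidly identify this certification as ``the real work,'' so as it stands the six-vertex cases are not actually proved. Two smaller points: for $K_4$, checking that each edge lies on exactly two of the three $4$-cycles is necessary but not sufficient --- one must also check that the corners at each vertex form a single rotation (they do here: at each vertex the three corners close up into one $3$-cycle); and your observation that odd Euler characteristic forces nonorientability correctly disposes of orientability for $\nQ(4,0)$ and $\nQ(6,1)$, leaving only the $\chi=0$ case needing a genuine Klein-bottle-versus-torus check, which you rightly flag.
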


 \begin{proof}
 We have $\nT_4=\{0\}$.  The complete graph $K_{4}$ admits a
quadrangular embedding with three squares (every $4$-cycle
in $K_4$ bounds a face) in the projective plane $N_1$, which is a
$\nQ(4,0)$.

 We have $\nT_5 = \{0\}$.  If $(n,t) = (5,0)$ then the graph is $K_5$,
but by Theorem \ref{complete} there is no nonorientable quadrangular
embedding of $K_5$, so no embedding exists for this case.

 \begin{figure}[!hbtp]\refstepcounter{figure}\label{k60}
  \begin{center}
  \includegraphics[scale=0.9]{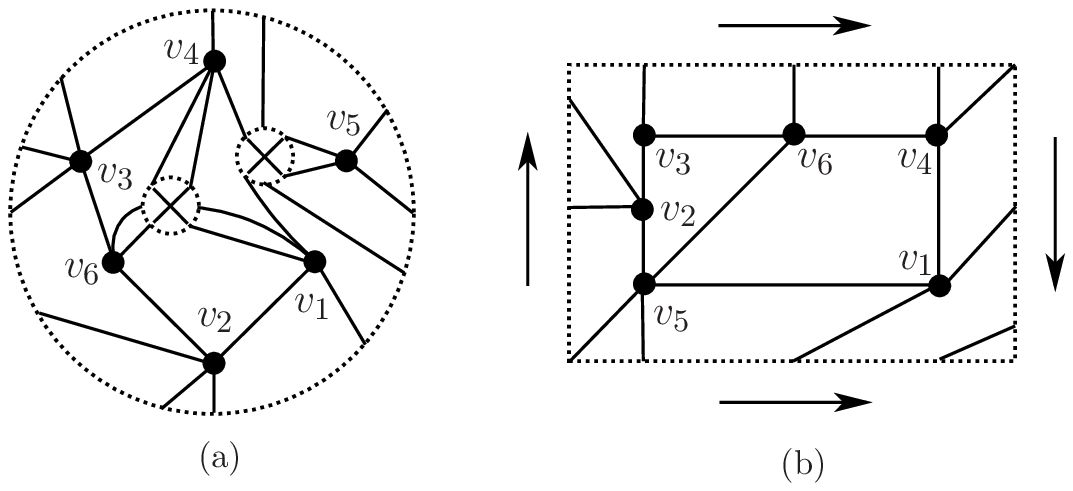} \\
  {Figure~\ref{k60}}
  \end{center}
  \end{figure}

 We have $\nT_6=\{1\}$.
Figure \ref{k60}(a) shows  that $K_{6}-E_{1}$ has a
quadrangular embedding in $N_{3}$ with $E_{1}=\{ v_{5}v_{6}\}$, which is
a $\nQ(6,1)$.
 Also, Figure \ref{k60}(b) shows that $K_{6}-E_{3}$ has a quadrangular
embedding $\Phi^{-}_{6}$ in $N_{2}$ with $E_{3}=\{v_{1}v_{3}, 
v_{2}v_{6},  v_{3}v_{4} \}$, which is a $\nQ(6,3)$ in $N_{2}$.
 \end{proof}


 \begin{lemma}\label{nonorieven}
 There exists a $\nQ(n,t)$ for each even $n \ge 8$ and $t \in \nT_n$.
 \end{lemma}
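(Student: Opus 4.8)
The plan is to mirror the inductive architecture of Lemma \ref{orieven}, but with a two-stage step $n\to n+4$ in place of four stages, using crosscap additions as the decisive new ingredient. Because the nonorientable constraint is only $t\equiv\tfrac12 n(n-5)$ (mod $2$), as the even order grows by $2$ the parity of $\tfrac12 n(n-5)$ alternates, so $n\bmod 4$ runs through $0,2$ and two stages ($p=n+2\equiv 2$ and $p=n+4\equiv 0$ (mod $4$)) complete one cycle. At each $n\equiv 0$ (mod $4$) I would carry two base embeddings, each with a perfect diagonal set: a $\nQ(n,0)=K_n$, and a $\nQ(n,2)$ that is $K_n$ missing two edges $e_1,e_2$ which are the two diagonals of a single common square (I call such a square \emph{doubly non-adjacent}, since both of its diagonals are non-edges). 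The base case $n=8$ is handled directly: Theorem \ref{complete} supplies a nonorientable quadrangular $K_8=\nQ(8,0)$, the octahedral embedding $O_8$ of \cite{HR1} gives $\nQ(8,4)$, and a direct construction gives the required doubly non-adjacent $\nQ(8,2)$, so all of $\nT_8=\{0,2,4\}$ is covered.

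In each stage I would add the two new vertices by a disc addition and then join them to the old vertices with a run of Type I handle additions, exactly as in Lemma \ref{orieven}. As the surface is already nonorientable these handles are automatically nonorientable and raise the Euler genus by $2$, so the embedding read off after each handle addition is a $\nQ(p,t)$ with $t$ descending in steps of $4$, i.e. along one residue class modulo $4$. The new feature is that $\nT_p$ is a single class modulo $2$ but two classes modulo $4$; the complementary class is reached by one crosscap addition, which adds exactly two edges (shifting $t$ by $2$) and raises the Euler genus by $1$. Two configurations are tailor-made for this. First, a crosscap on the reserved doubly non-adjacent square of $\nQ(n,2)$ installs $e_1,e_2$ and moves an entire handle run from one residue class to the other. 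Second, once all four new vertices are present the square $v_{n+1}v_{n+3}v_{n+2}v_{n+4}$ is doubly non-adjacent, so a single crosscap there installs the two ``new-new'' edges $v_{n+1}v_{n+2}$ and $v_{n+3}v_{n+4}$ simultaneously and carries a densest run down to a complete graph.

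Concretely I would run two threads. From $\nQ(n,0)$, handles alone sweep $t\equiv 1$ (mod $4$) for $p=n+2$ and $t\equiv 2$ (mod $4$) for $p=n+4$; its densest output is a $\nQ(n+4,2)$, automatically doubly non-adjacent with reserved square $v_{n+1}v_{n+3}v_{n+2}v_{n+4}$ and, since no crosscap is used, a clean perfect diagonal set --- this is the $\nQ(n+4,2)$ for the next step. From $\nQ(n,2)$, handles sweep $t\equiv 3$ (mod $4$) for $p=n+2$ and $t\equiv 0$ (mod $4$) for $p=n+4$, down to the handle-only densest value $t=4$. Finally, one crosscap on the reserved square of the new $\nQ(n+4,2)$ installs its two missing edges and produces $\nQ(n+4,0)=K_{n+4}$. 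Together the two threads and this crosscap yield every $t\in\nT_{n+2}$ and every $t\in\nT_{n+4}$ and re-establish both pieces of the invariant.

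The step I expect to be the main obstacle is keeping the diagonal sets \emph{perfect} across the crosscap additions. By definition a crosscap addition destroys both diagonals of its underlying square and replaces that square by two new squares whose diagonals differ, so each crosscap momentarily leaves four vertices unpaired; a parallel difficulty is that the reserved doubly non-adjacent square must be \emph{maintained} as a single face through the intervening handle run. The delicate work is to show that the two new squares a crosscap creates, together with handle additions of Type III (which, unlike Type IV, retain prescribed diagonals and let us steer the reserved square into place), can always be used to re-pair exactly the affected four vertices, so that a perfect diagonal set survives to furnish both $\nQ(n+4,0)$ and $\nQ(n+4,2)$. Granting this, preservation of quadrangularity is automatic from the operations, and verifying that the stated edge counts meet every $t\in\nT_p$ is the same routine arithmetic as in Lemma \ref{orieven}.
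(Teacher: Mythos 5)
Your overall skeleton---a two-stage induction $n\to n+2\to n+4$ with runs of Type I handle additions, crosscap additions used to toggle between the two residue classes of $t$ modulo $4$, and an invariant carried at $n\equiv 0\pmod 4$---matches the paper's, and your edge-count arithmetic is sound. But there is a genuine gap at the crux. You assert that the densest output of the $\nQ(n,0)$ thread is ``automatically'' a $\nQ(n+4,2)$ with a reserved face $v_{n+1}v_{n+3}v_{n+2}v_{n+4}$ whose two diagonals are the missing edges. Such a face does exist immediately after the disc addition inserting $v_{n+3},v_{n+4}$ into a square with diagonal $\di(v_{n+1},v_{n+2})$ (it is the middle square $f_2$ of Operation 1), but at that moment it is the \emph{only} square having $\di(v_{n+3},v_{n+4})$ as a diagonal, so the very first Type I handle addition joining $v_{n+3},v_{n+4}$ to an old diagonal pair must consume it; and Type I additions preserve only $\di(v_1,v_3)$ and $\di(u_2,u_4)$, so no square carrying \emph{both} missing pairs as diagonals is reproduced. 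Hence your doubly non-adjacent square cannot simply be ``maintained as a single face through the intervening handle run''---it would have to be re-manufactured at the end by Type II/III additions, which is exactly the ``delicate work'' you leave open, together with re-pairing the four vertices a crosscap orphans in the diagonal set. The proposal therefore stands or falls on the one step you have not carried out.

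The paper avoids both difficulties with a strictly weaker invariant: it carries only a $\nQ(n,2)$, $n\equiv 0\pmod 4$, in which the four endpoints $u_1,u_2,u_3,u_4$ of the two missing edges lie in four \emph{distinct} diagonals of a perfect diagonal set---no requirement that the missing edges be diagonals of one common face, and no $\nQ(n,0)$ is carried ($K_{n+4}$ comes out as a byproduct of Stage 2). With $u_1,u_2$ in different diagonals, a disc addition at the diagonal containing $u_1$ followed by a handle addition at the diagonal containing $u_2$ automatically creates a square such as $v_{n+1}u_2v_{n+2}u_1$, whose diagonals are the missing old edge $u_1u_2$ and the missing new edge $v_{n+1}v_{n+2}$; a crosscap there, placed \emph{early} in the run and optionally omitted, shifts the remaining sweep by $2$, so the two choices jointly cover both classes modulo $4$ with nothing needing to survive the run. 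Note that this transient square is precisely your doubly non-adjacent configuration, created on the fly rather than maintained; and since the crosscapped square is never an underlying square of the diagonal set (both its diagonals are non-edges, not members of $\sD_n$), the perfect diagonal set passes through crosscaps untouched, so your re-pairing problem never arises. If you weaken your invariant to the paper's distinct-diagonals condition, your two-thread bookkeeping collapses to a single thread and the open step disappears.
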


 \begin{proof}
 We proceed inductively.

 \begin{basis}
 There exist $\nQ(8,t)$ for all $t \in \nT_8$.
 In particular, there exists a $\nQ(8,2)$ that embeds a graph $K_8 -
\{u_1 u_2, u_3 u_4\}$ and has a perfect diagonal set $\sD_8$ in which
$u_1$, $u_2$, $u_3$ and $u_4$ belong to four distinct diagonals.
 \end{basis}

 \begin{figure}[!hbtp]\refstepcounter{figure}\label{nk8}
  \begin{center}
  \includegraphics[scale=0.8]{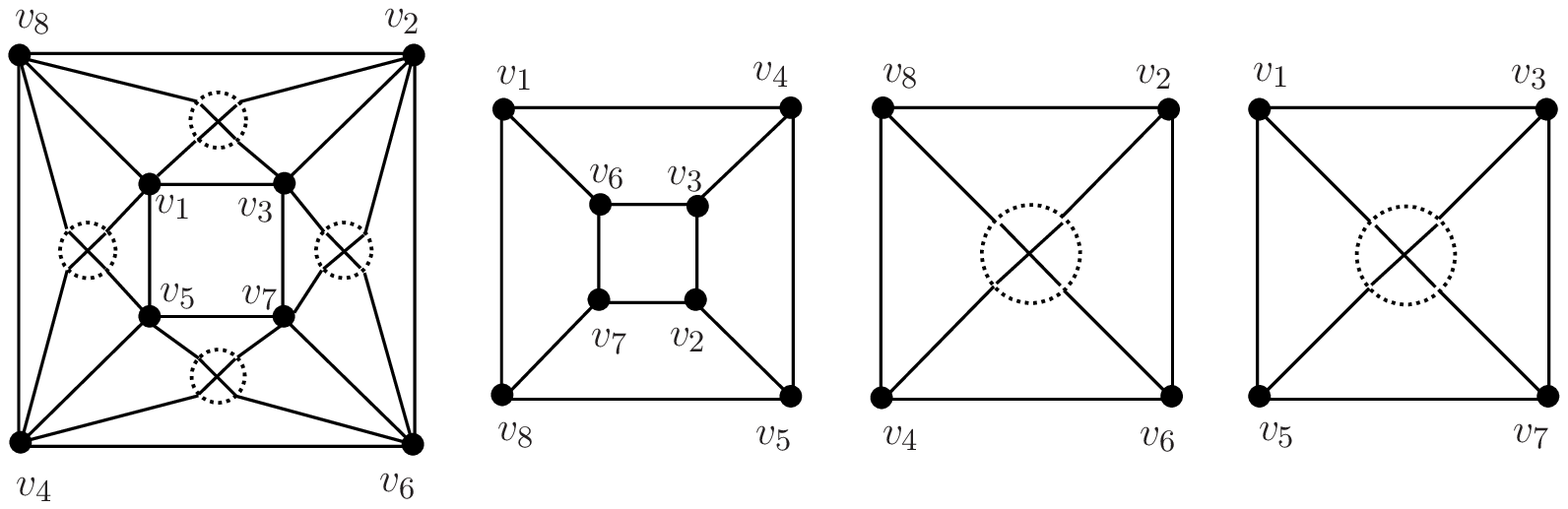} \\
  {Figure~\ref{nk8}}
  \end{center}
  \end{figure}

 We have $\nT_8 = \{0, 2, 4\}$.  At left in Figure \ref{nk8} is a
$\nQ(8,8)$ obtained by four crosscap additions from the usual spherical
(planar) embedding of a cube, to which we apply a handle addition of
Type IV followed by two crosscap additions.  The result is a
quadrangular embedding of $K_8$ in $N_8$, and along the way we construct
embeddings $\nQ(8,t)$ with $t=8, 4, 2, 0$, which includes all $t \in
\nT_8$.

 We examine the $\nQ(8,2)$ obtained by performing the handle addition
and the first crosscap addition, but not the second crosscap addition. 
 This is a quadrangular embedding $\Phi^-_8$ of $K_8 - \{v_1v_7,
v_3v_5\}$ in $N_7$.
 There is a perfect diagonal set
 $$\sD_8 = \{\diq(v_8, v_1, v_8 v_4 v_1 v_5),
	\diq(v_2, v_3, v_2 v_8 v_3 v_1),
	\diq(v_4, v_5, v_4 v_6 v_5 v_7),
	\diq(v_6, v_7, v_6 v_2 v_7 v_3)
 \}.$$
 Taking $u_1=v_1$, $u_2=v_7$, $u_3=v_3$ and $u_4=v_5$, we see that the
conditions for the particular $\nQ(8,2)$ are satisfied.

 \begin{inductionstep}
 Suppose $n = 4k$, $k \ge 2$, and we are given
 a $\nQ(n,2)$ that embeds a graph $K_n - \{u_1 u_2, u_3 u_4\}$ and has a
perfect diagonal set $\sD_n$ in which $u_1$, $u_2$, $u_3$ and $u_4$
belong to four distinct diagonals.
 Then there exist $\nQ(p,t)$ for all $p \in \{n+2, n+4\}$ and $t \in
\nT_p$.
 In particular, there exists
 a $\nQ(n+4,2)$ that embeds a graph $K_{n+4} - \{u'_1 u'_2, u'_3 u'_4\}$
and has a perfect diagonal set $\sD_{n+4}$ in which $u'_1$, $u'_2$,
$u'_3$ and $u'_4$ belong to four distinct diagonals.
 \end{inductionstep}

 Suppose that $n=4k$, $k \ge 2$, and there is a $\nQ(n,2)$, denoted by
$\Phi^-_n$, as described above.  We may assume without loss of
generality that
 $$\sD_n = \{\di(v_1, v_2), \di(v_3, v_4), \di(v_5, v_6), \di(v_7, v_8),
\dots, \di(v_{n-1}, v_n)\}$$
 and that $u_1=v_1$, $u_2=v_3$, $u_3=v_5$ and $u_4=v_7$.  Thus,
$\Phi^-_n$ is an embedding of $K_n - E_2$ with $E_2 = \{v_1 v_3, v_5
v_7\}$.  We construct the necessary embeddings in two stages.

 \begin{stage-ne}
 Suppose $p = n+2$.  Since $n+2 \equiv 2$ (mod $4$) we have
 $\nT_{n+2} = \{t \;|\; 0 \le t \le (n+2)-4,\, t \equiv 1$ (mod $2$)$\}
= \{1, 3, 5 \dots, n-3\}$.
 \end{stage-ne}

 We start with $\Phi^-_n$.  First, by applying a disc addition, we add
two vertices $v_{n+1}$ and $v_{n+2}$ into the square with diagonal
$\di(v_1, v_2)$ from $\sD_n$.  Then we emply a handle addition of Type I
with $\di(v_3, v_4)$, creating two reserved squares, one of which, $v_3
v_{n+2} v_4 x_1$ is for use in Stage 2 below.  The other reserved
square, $v_{n+1} v_3 v_{n+2} v_1$, we use
immediately for a crosscap addition.  We then perform $\frac n2 - 2$
handle additions of Type I using diagonals $\di(v_5, v_6)$, $\di(v_7,
v_8)$, $\dots$, $\di(v_{n-1}, v_n)$ in that order.  The first of these
creates another reserved square $v_5 v_{n+1} v_6 x_2$ for use in Stage
2.
 See Figure \ref{nekn2}.

 \begin{figure}[!hbtp]\refstepcounter{figure}\label{nekn2}
  \begin{center}
  \includegraphics[scale=0.7]{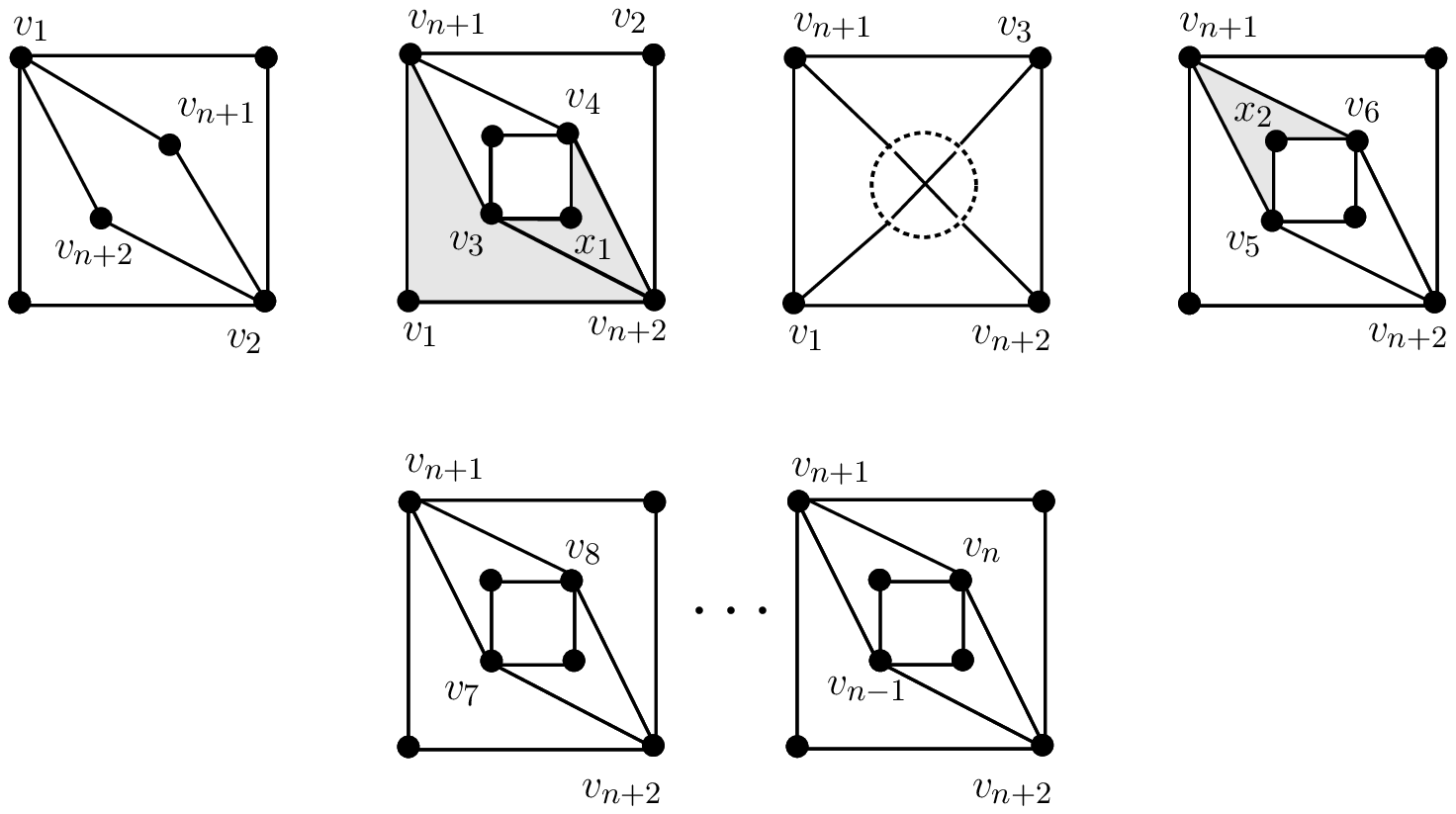} \\
  {Figure~\ref{nekn2}}
  \end{center}
  \end{figure}

 After the initial disc addition we have $\binom{n}2 + 2 = \binom{n+2}2
- (2n-1)$ edges.  Since handle additions add four edges and crosscap
additions add two edges, our process
creates embeddings $\nQ(n+2, t)$ for $t = 2n-1, 2n-5, 2n-7, 2n-11,
2n-15, \dots, 5, 1$.
 However, we do not use the squares created by the crosscap addition in
any later handle addition, so we can just omit the crosscap addition. 
This creates embeddings $\nQ(n+2, t)$ for $t=2n-1, 2n-5, 2n-9, \dots, 7,
3$.
 Combining these, we have $\nQ(n+2, t)$ for $t = 2n-1, 2n-5, 2n-7, 2n-9,
\dots, n-3, \dots, 3, 1$, i.e., for all odd $t$ with $1 \le t \le 2n-1$
except $t = 2n-3$.  Since $n \ge 8$, this includes all $t \in
\nT_{n+2}$.

 The final result $\Phi_{n+2}$ is an embedding of $K_{n+2}-E_1$ where
$E_1 = \{v_5 v_7\}$.  Using the reserved squares containing $x_1$ and
$x_2$, we see that $\Phi_{n+2}$ has a perfect diagonal set
 \begin{align*}
 \sD_{n+2} &= \{
	\di(v_1, v_2),
	\diq(v_3, v_4, v_3 v_{n+2} v_4 x_1),
	\diq(v_5, v_6, v_5 v_{n+1} v_6 x_2),
	\\ & \qquad	
	\diq(v_7, v_8), \di(v_9, v_{10}), \dots, \di(v_{n+1}, v_{n+2})
 \}.
 \end{align*}

 \begin{stage-ne}
 Suppose $p = n+4$.  Since $n+4 \equiv 0$ (mod $4$) we have
 $\nT_{n+4} = \{t \;|\; 0 \le t \le
(n+4)-4,\, t \equiv 0$ (mod $2$)$\} = \{0, 2, 4, \dots, n\}$.
 \end{stage-ne}

 Starting from $\Phi_{n+2}$ with $\sD_{n+2}$ we can employ a disc
addition to add vertices $v_{n+3}$ and $v_{n+4}$, using the reserved
underlying square for $\di(v_5, v_6)$ as the outer square.  This creates
a reserved square $v_{n+1} v_6 v_{n+3} v_5$ which we will use to create
a new diagonal later.
 Then we perform a handle addition of Type I using $\di(v_7, v_8)$,
creating a reserved square $v_{n+3} v_7 v_{n+4} v_5$, which we use
immediately for a crosscap addition.  Next we employ a handle addition
of Type I, using the reserved underlying square for $\di(v_3, v_4)$ as
the inner square.  This creates a new reserved square $v_{n+2} v_4
v_{n+4} v_3$, which we will also use to create a new diagonal later.
 Finally we perform $\frac n2 - 2$ handle additions of Type I using all
remaining diagonals from $\sD_{n+2}$.
 See Figure \ref{nekn4}.

 \begin{figure}[!hbtp]\refstepcounter{figure}\label{nekn4}
  \begin{center}
  \includegraphics[scale=0.7]{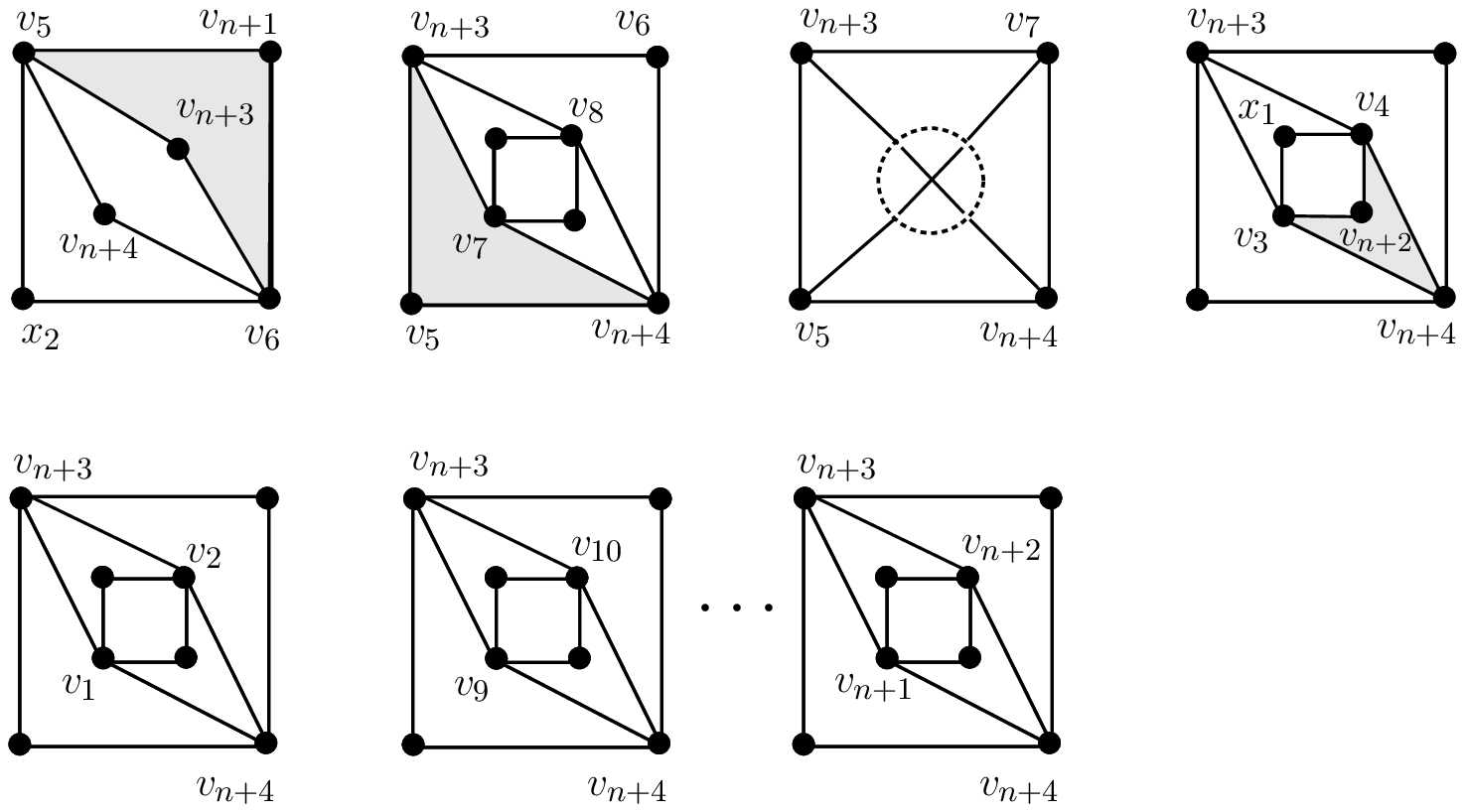} \\
  {Figure~\ref{nekn4}}
  \end{center}
  \end{figure}

 After the initial disc addition we have $\binom{n+2}2 + 3 =
\binom{n+4}2 - (2n+2)$ edges.  Since handle additions add four edges and
crosscap additions add two edges, our process creates embeddings
$\nQ(n+4, t)$ for $t = 2n+2, 2n-2, 2n-4, 2n-8, 2n-12, \dots, 4, 0$.
 However, we do not use the square created by the crosscap addition in
any later handle addition, so we can just omit the crosscap addition. 
This produces $\nQ(n+4, t)$ for $t = 2n+2, 2n-2, 2n-6, 2n-10, \dots, 6, 2$.
 Combining these, we have $\nQ(n+4, t)$ for $t = 2n+2, 2n-2, 2n-4, 2n-6,
2n-8, \dots, n, \dots, 2, 0$, i.e., for all even $t$ with $0 \le t \le
2n+2$ except $t=2n$.  Since $n \ge 8$, this includes all $t \in
\nT_{n+4}$.

 If we perform all handle additions but omit the crosscap addition we
obtain an embedding $\Phi^-_{n+4}$ of $K_{n+4}-E_2$ where $E_2 = \{v_5
v_7, v_{n+3} v_{n+4}\}$.  This is a $\nQ(n+4,
2)$.  Using two reserved squares to create new diagonals, we see that
$\Phi^-_{n+4}$ has a perfect diagonal set
 \begin{align*}
 \sD_{n+4} &= \{
	\di(v_1, v_2), \di(v_3, v_4), \di(v_5, v_6), \di(v_7, v_8),
		\dots, \di(v_{n-1}, v_n),
	\\ & \qquad	
	\diq(v_{n+1}, v_{n+3}, v_{n+1} v_6 v_{n+3} v_5),
	\diq(v_{n+2}, v_{n+4}, v_{n+2} v_4 v_{n+4} v_3)
 \}.
 \end{align*}
 Taking $u'_1 = v_5$, $u'_2 = v_7$, $u'_3 = v_{n+3}$, $u'_4 = v_{n+4}$,
we see that the required properties for a particular $\nQ(n+4, 2)$ hold.

 This completes the proof of the induction step.  Now the basis and the
induction step together imply Lemma \ref{nonorieven}.
 \end{proof}

 \begin{lemma}\label{nonoriodd}
 There exists a $\nQ(n,t)$ for each odd $n \ge 7$ and $t \in \nT_n$.
 \end{lemma}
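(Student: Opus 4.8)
The plan is to mirror the inductive structure of Lemma~\ref{nonorieven}, but now for odd orders, proceeding in an induction that adds two vertices at a time and steps through residues modulo $4$. First I would establish a basis case. Since $\nT_7 = \{t \;|\; 0 \le t \le 3,\, t \equiv \frac12\cdot 7\cdot 2 = 7 \equiv 1$ (mod $2)\} = \{1, 3\}$, the basis must produce a $\nQ(7,1)$ and a $\nQ(7,3)$, and in particular one distinguished $\nQ(7,t)$ carrying a \emph{full} diagonal set with the kind of nearly-perfect substructure that the induction can consume. For odd orders one cannot hope for a perfect diagonal set (an odd number of vertices cannot be partitioned into disjoint pairs), so the right invariant to carry through the induction is a full diagonal set that contains two nearly-perfect subsets (missing a vertex $v_1$ in one and a vertex $v_2$ in the other), exactly as in the orientable odd case of Lemma~\ref{oriodd}. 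I would obtain the basis embeddings either by crosscap and handle additions starting from a small known quadrangulation, or by citing an explicit embedding of $K_7$ minus a suitable edge set, and then exhibit the diagonal set concretely via shaded squares in a figure.

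For the induction step I would assume a distinguished $\nQ(n,t_0)$ of order $n \equiv 3$ (mod $4$) carrying a full diagonal set with $v_1$- and $v_2$-nearly-perfect subsets, and construct all the required embeddings of orders $n+2$ and $n+4$. The engine is the same as before: a disc addition introduces the two new vertices $v_{n+1}, v_{n+2}$ with a fresh diagonal $\di(v_{n+1}, v_{n+2})$; then a run of Type~I handle additions sweeps through the diagonals of the appropriate nearly-perfect subset, joining the new pair to almost all old vertices and dropping the edge count by four each time, which produces the arithmetic progression of achievable values of $t$ with common difference $4$. Because $\nT_p$ is an arithmetic progression with difference $2$ (not $4$), I must also obtain the ``intermediate'' values, and the mechanism for that is a crosscap addition, which changes the edge count by two. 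As in Stages~1 and~2 of Lemma~\ref{nonorieven}, I would arrange that the crosscap addition can be either performed or omitted, so that running each of the two sweeps (one Type~I sweep ending with a crosscap, the other identical but with the crosscap omitted) yields two interleaved progressions of difference $4$ whose union covers all of $\nT_p$; I would verify the sole omitted residue (analogous to the ``except $t = 2n-3$'' and ``except $t = 2n$'' gaps above) falls outside $\nT_p$ once $n$ is large enough.

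The main obstacle I expect is \textbf{bookkeeping the reserved squares so that the distinguished output embedding again carries a full diagonal set with $v_1$- and $v_2$-nearly-perfect subsets}, while simultaneously ensuring the non-orientability is genuinely forced and every face stays a square. Each crosscap or Type~III/IV addition destroys the diagonals of the squares it touches (as noted in the descriptions of those operations), so I must reserve, before each such destructive step, the right squares whose diagonals survive and can be re-incorporated into $\sD_{n+4}$; the delicate part is tracking which vertices $v_1, v_2$ play the role of the two ``missing'' vertices across the residue classes $n+2 \equiv 1$ and $n+4 \equiv 3$ (mod $4$), since the nearly-perfect structure must be handed back in exactly the form the next induction step expects. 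Concretely, I would define at each stage an augmenting set $\nT^+$ of diagonals built from the reserved squares (paralleling the $\sD^+$ notation of Lemma~\ref{oriodd}) and verify $\sD_{n+4} = \sD_n \cup \nT^+_{n+4}$ is full with the two nearly-perfect subsets $\sD'_n \cup \nT^+_{n+4}$ and $\sD''_n \cup \nT^+_{n+4}$. Once the diagonal-set invariant is re-established and the edge-count progressions are checked to cover $\nT_{n+2}$ and $\nT_{n+4}$, the basis together with the induction step gives the lemma for all odd $n \ge 7$.
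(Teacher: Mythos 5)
Your overall architecture matches the paper's: a two-stage induction adding two vertices at a time through the residues $n+2 \equiv 1$ and $n+4 \equiv 3$ (mod $4$), a disc addition followed by a sweep of Type~I handle additions, and optional crosscap additions whose performance or omission interleaves two difference-$4$ progressions to cover the difference-$2$ set $\nT_p$ (with the one omitted value checked to lie outside $\nT_p$). However, there is a genuine gap in your induction hypothesis. You propose to carry the invariant from Lemma~\ref{oriodd} — a full diagonal set with $v_1$- and $v_2$-nearly-perfect subsets — but in the orientable odd case the carried embedding is a $\Q(n,0)$ of the \emph{complete} graph, whereas here no $\nQ(n,0)$ exists when $n \equiv 3$ (mod $4$) (parity forces $t$ odd), so the distinguished embedding necessarily misses edges \emph{among the old vertices}. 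Your Type~I sweep only adds new--old edges, and crosscap additions as you deploy them only add the diagonals of the square they sit in; so to reach the small values of $t$ — in particular $t=0$ at order $n+2 \equiv 1$ (mod $4$), where you must quadrangulate $K_{n+2}$ itself — the induction must restore those specific old--old missing edges. A diagonal-set invariant alone cannot guarantee this: nothing in it records \emph{which} edges are missing or ensures that a suitable face sits next to them.

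This is exactly what the paper's ``Property P'' supplies and your sketch lacks: the carried object is a $\nQ(n,3)$ whose missing edges are pinned down as $E_3 = \{v_1 v_{n-1},\, v_1 v_n,\, v_{n-3} v_{n-2}\}$, together with (beyond the full diagonal set, which indeed contains the two nearly-perfect subsets via $\di(v_1,v_3)$ and $\di(v_2,v_3)$) a \emph{reserved square} $v_2 v_{n-1} x_1 v_n$ that is not an underlying square of $\sD_n$. The edges $v_1 v_{n-1}$ and $v_1 v_n$ are then restored by a Type~II handle addition pairing this reserved square with a square produced by the last Type~I addition, while $v_{n-3} v_{n-2}$ is restored (optionally, giving your parity toggle) by a crosscap addition whose two edges are $v_{n-3}v_{n-2}$ and $v_{n+1}v_{n+2}$ — so the crosscaps must be placed in faces whose diagonals are precisely the missing edges, not merely anywhere convenient. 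Re-establishing Property P at order $n+4$ then requires tracking a fresh reserved square (the paper even has to swap the names of $v_1$ and $v_2$ at the end of Stage~2 to hand the invariant back in the expected form). So the fix for your proposal is to strengthen the induction hypothesis from ``full diagonal set with two nearly-perfect subsets'' to an invariant that also fixes the identity of the three missing edges and reserves a square incident with them in the correct diagonal positions; without that, the step from order $n$ to $n+2$ cannot reach all of $\nT_{n+2}$. (Your worry about forcing nonorientability, by contrast, is a non-issue: all the operations preserve it once the basis embedding is nonorientable.)
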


 \begin{proof}
 We proceed inductively.  Our argument requires a slightly technical
induction hypothesis, so we make the following definition.

 \begin{definition}
 A $\nQ(n,3)$, where $n \equiv 3$ (mod $4$), is said to have
\emph{Property P} if the following conditions (a), (b) and (c) hold.

(a) The graph is $K_n-E_3$ where $E_3 = \{v_1 v_{n-1}, v_1 v_n, v_{n-3}
v_{n-2}\}$.

(b) There is a full diagonal set
 $$
  \sD_n = \{
	\di(v_1, v_3), \di(v_2, v_3),
	\di(y_1, y_2), \di(y_3, y_4), \dots, \di(y_{n-8}, y_{n-7}),
	\di(v_{n-3}, v_{n-1}), \di(v_{n-2}, v_n)
 \}
 $$
 where $\{y_1, y_2, \dots, y_{n-7}\} = \{v_4, v_5, \dots, v_{n-4}\}$.


(c) There is a square $v_2 v_{n-1} x_1 v_n$ (the exact identity of $x_1$
does not matter) that is not an underlying square for $\sD_n$.  (We
reserve this square for later use.)
 \end{definition}

 \begin{basis}
 There exist $\nQ(7,t)$ for all $t \in \nT_7$.  In particular, there exists
a $\nQ(7,3)$ with Property P.
 \end{basis}

 We have $\nT_7=\{1,3\}$.  Figure \ref{k7} shows a quadrangular
embedding $\Phi_7$ of $K_7-E_1$ in $N_5$ with $E_1 = \{v_1v_6\}$.  This
is the required $\nQ(7,1)$.
 If we delete the two edges $v_1 v_7$ and $v_4 v_5$ of $\Phi_7$, we
create a face, bounded by a $4$-cycle and containing a crosscap, which
we can remove and replace by a disc (this is the inverse of a crosscap
addition).
 We obtain a quadrangular embedding $\Phi^-_7$ of $K_7-E_3$ in $N_4$,
which is a $\nQ(7,3)$.  We verify that $\Phi^-_7$ also has Property P.
 (a)~The missing edges form $E_3 = \{v_1 v_6, v_1 v_7, v_4 v_5\}$, as
required.
 (b)~There is a full diagonal set
 \begin{align*}
  \sD_7 = \{
	\diq(v_1, v_3, v_1 v_2 v_3 v_5),
	\diq(v_2, v_3, v_2 v_1 v_3 v_5),
	\diq(v_4, v_6, v_4 v_2 v_6 v_3),
	\diq(v_5, v_7, v_5 v_2 v_7 v_6)
 \}
 \end{align*}
 of the required form.
 %
 %
 (c)~There is a square $v_2 v_6 v_5 v_7$ that is not an underlying square for
$\sD_7$, as required.
 Thus, $\Phi^-_7$ has Property P.

 \begin{figure}[!hbtp]\refstepcounter{figure}\label{k7}
  \begin{center}
  \includegraphics[scale=0.75]{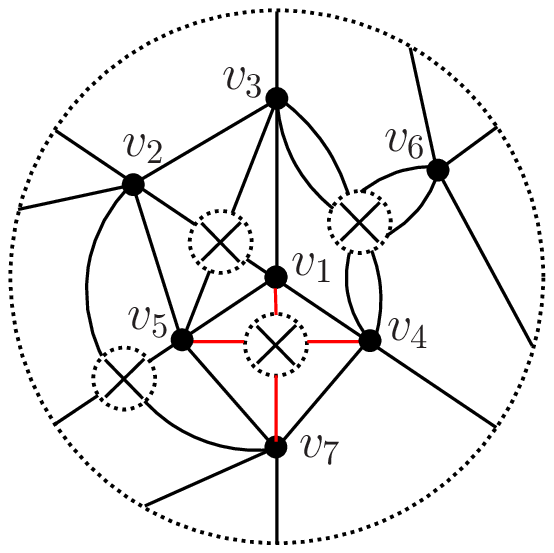} \\
  {Figure~\ref{k7}}
  \end{center}
  \end{figure}

 \begin{inductionstep}
 Given a $\nQ(n,3)$ with Property P, where $n=4k+3$, $k \ge 1$,
there exist $\nQ(p,t)$ for all $p \in \{n+2, n+4\}$ and $t \in
\nT_{p}$.  In particular, there exists a $\nQ(n+4,3)$ with Property P.
 \end{inductionstep}

 Suppose that $n=4k+3$, $k \ge 1$, and there is a $\nQ(n,3)$, denoted by
$\Phi^-_n$, satisfying Property P.  We construct the necessary
embeddings in two stages.

 \begin{stage-no}
 Suppose $p = n+2$.  Since $n+2 \equiv 1$ (mod $4$) we have
 $\nT_{n+2} = \{t \;|\; 0 \le t \le (n+2)-4,\, t \equiv 0$ (mod $2$)$\}
= \{0, 2, 4, \dots, n-3\}$.
 \end{stage-no}

 We start with $\Phi^-_n$.  First, by applying a disc addition, we add
two vertices $v_{n+1}$ and $v_{n+2}$ into the square with $\di(v_{n-3},
v_{n-1})$ from $\sD_n$, creating a reserved square $v_{n-3}
v_{n+1} v_{n-1} x_2$ for use in Stage 2 below.  Then we employ a handle addition of
Type I with $\di(v_{n-2}, v_n)$, creating two reserved squares, one of
which, $v_{n-2} v_{n+2} v_n x_3$, is for use in Stage 2.  The other
reserved square, $v_{n+1} v_{n-3} v_{n+2} v_{n-2}$, we use immediately
for a crosscap addition.
 We then perform $\frac{n-5}2$ handle additions of Type I using
diagonals $\di(y_1, y_2)$, $\di(y_3, y_4)$, $\dots$, $\di(y_{n-8},
y_{n-7})$, $\di(v_1, v_3)$, in that order.
 The last handle addition creates a reserved square $v_{n+1} v_1 v_{n+2}
x_4$, which we then use, along with the reserved square $v_2 v_{n-1} x_1
v_n$ from Property P(c), in a handle addition of Type II, which creates
a further reserved square $ v_{n+1} v_2 v_{n+2} x_4$ for use in
Stage 2.
 See Figure \ref{nokn2}.

 \begin{figure}[!hbtp]\refstepcounter{figure}\label{nokn2}
  \begin{center}
  \includegraphics[scale=0.7]{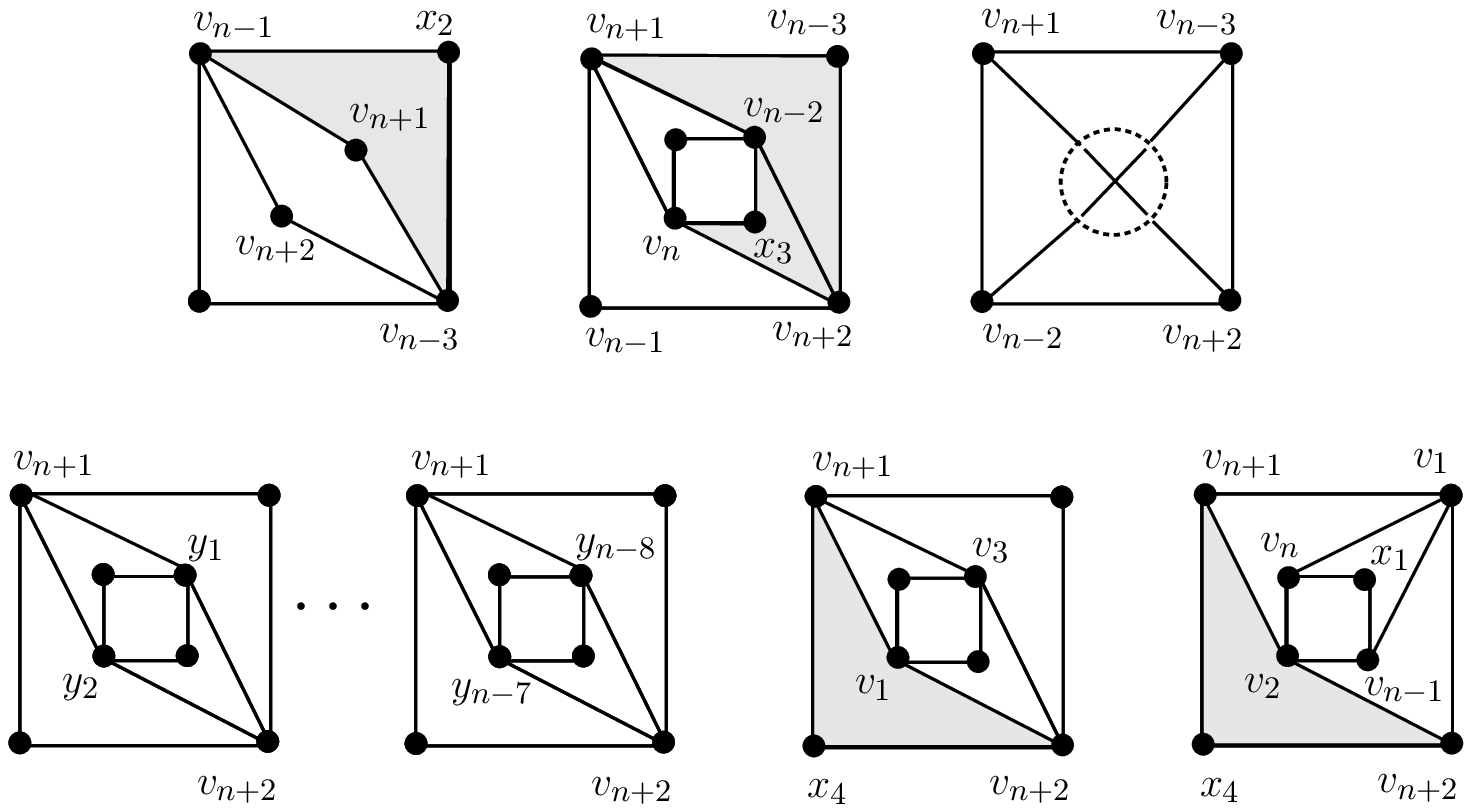} \\
  {Figure~\ref{nokn2}}
  \end{center}
  \end{figure}

 After the initial disc addition we have $\binom{n}2+1 = \binom{n+2}2 -
2n$ edges.  Since handle additions add four edges and crosscap additions
add two edges, our process creates embeddings $\nQ(n+2,t)$ for $t = 2n,
2n-4, 2n-6, 2n-10, 2n-14, \dots, 4, 0$.
 However, we do not use the squares created by the crosscap
addition in any later handle addition, so we can just omit the crosscap
addition.  This creates embeddings $\nQ(n+2, t)$ for $t=2n, 2n-4, 2n-8,
\dots, 6, 2$.
 Combining these, we have $\nQ(n+2,t)$ for $t = 2n, 2n-4, 2n-6, 2n-8,
\dots, n-3, \dots, 2, 0$, i.e., for all even $t$ with $0 \le t \le 2n$
except $t = 2n-2$.  Since $n \ge 7$, this includes all $t \in
\nT_{n+2}$.

 If we perform all handle additions but omit the crosscap addition, we
obtain an embedding $\Phi^-_{n+2}$ of $K_{n+2}-E_2$ where $E_2 =
\{v_{n-2} v_{n-3}, v_{n+1} v_{n+2}\}$ (the edges of the omitted crosscap
addition).
 Using the reserved squares containing $x_2$, $x_3$ and $x_4$, we see
that $\Phi^-_{n+2}$ has a full diagonal set
 \begin{align*}
  \sD_{n+2} &= \{
	\di(v_1, v_3), \di(v_2, v_3),
	\di(y_1, y_2), \di(y_3, y_4), \dots, \di(y_{n-8}, y_{n-7}),
	\diq(v_{n-3}, v_{n-1}, v_{n-3} v_{n+1} v_{n-1} x_2),
	\\ &\qquad
	\diq(v_{n-2}, v_n, v_{n-2} v_{n+2} v_n x_3),
	\diq(v_{n+1}, v_{n+2}, v_{n+1} v_2 v_{n+2} x_4)
 \}.
 \end{align*}

 \begin{stage-no}
 Suppose $p = n+4$.  Since $n+4 \equiv 3$ (mod $4$) we have
 $\nT_{n+4} = \{t \;|\; 0 \le t \le (n+4)-4,\, t \equiv 1$ (mod $2$)$\}
= \{1, 3, 5, \dots, n\}$.
 \end{stage-no}

 Starting from $\Phi^-_{n+2}$ with $\sD_{n+2}$, we can employ a disc
addition to add vertices $v_{n+3}$ and $v_{n+4}$, using the reserved
underlying square for $\di(v_{n-3}, v_{n-1})$ as the outer square.
 This creates a reserved square $v_{n+1} v_{n-1} v_{n+3} v_{n-3}$ which
we will use to satisfy Property P(b).
 Then we perform a handle addition of Type I, using the reserved
underlying square for $\di(v_{n-2}, v_n)$ as the inner square.  This
creates two reserved squares, one of which, $v_{n+2} v_n v_{n+4}
v_{n-2}$, we will use to satisfy Property P(b).  The other, $v_{n+3}
v_{n-2} v_{n+4} v_{n-3}$, we use immediately for a crosscap addition.
 Next we employ a handle addition of Type I, using the reserved
underlying square for $\di(v_{n+1}, v_{n+2})$ as the inner square.  This
creates a reserved square $v_{n+2} v_2 v_{n+1} v_{n+4}$, which we use
immediately for a crosscap addition.
 We then perform $\frac{n-5}2$ handle additions of Type I using
diagonals $\di(y_1, y_2)$, $\di(y_3, y_4)$, $\dots$, $\di(y_{n-8},
y_{n-7})$, $\di(v_1, v_3)$, in that order.
 The last handle addition creates a reserved square $v_1 v_{n+4} x_5
v_{n+3}$, which we will use to satisfy Property P(c).
 See Figure \ref{nokn4}.

 \begin{figure}[!hbtp]\refstepcounter{figure}\label{nokn4}
  \begin{center}
  \includegraphics[scale=0.7]{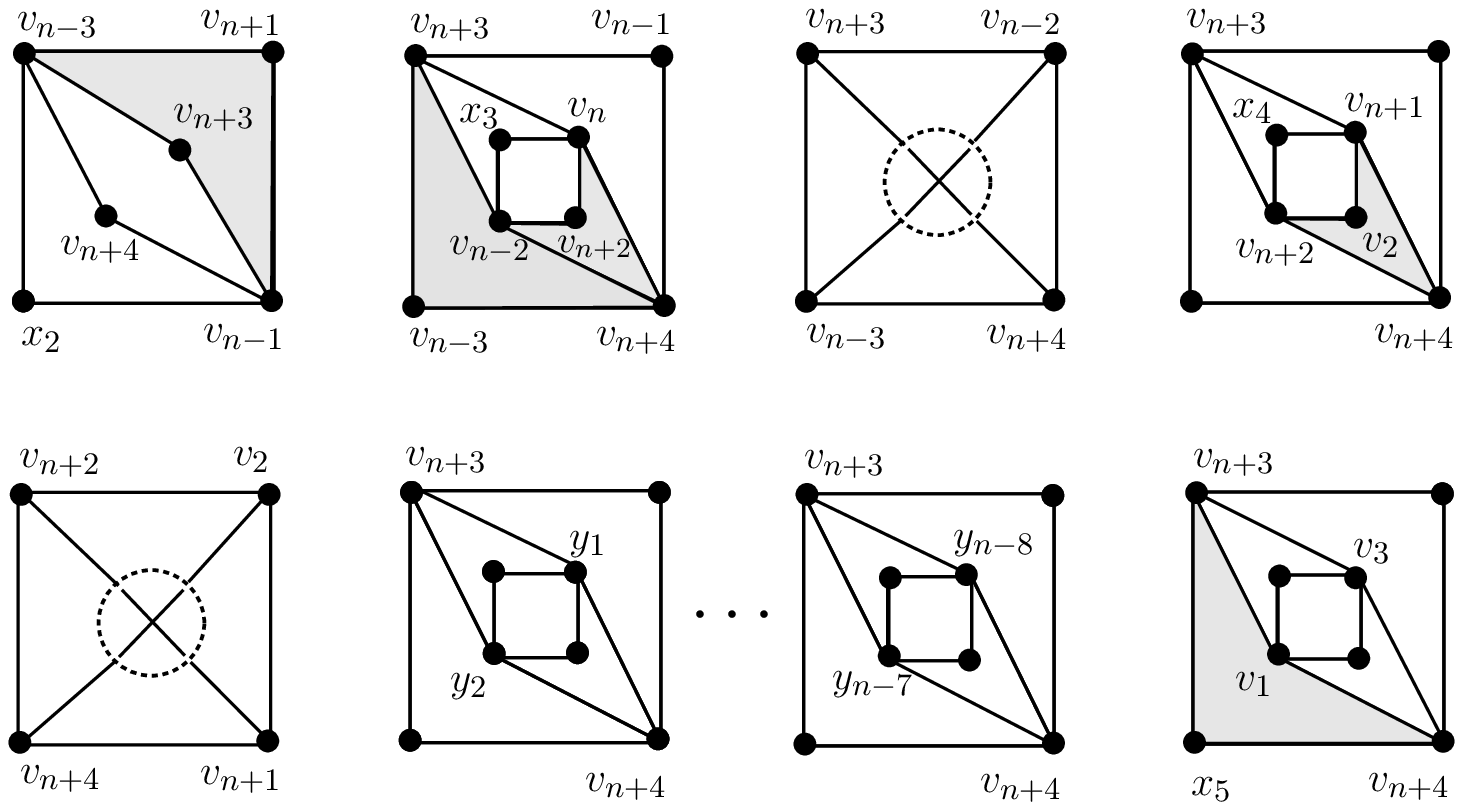} \\
  {Figure~\ref{nokn4}}
  \end{center}
  \end{figure}

 After the initial disc addition we have $\binom{n+2}2+2 = \binom{n+4}2
- (2n+3)$ edges.  Since handle additions add four edges and crosscap
additions add two edges, our process creates embeddings $\nQ(n+4,t)$ for
 $t= 2n+3, 2n-1, 2n-3, 2n-7, 2n-9, 2n-13, 2n-17, \dots, 5, 1$.
 However, we do not use the squares created by either crosscap addition in
any later handle addition, so we can omit one or both crosscap
additions.  If we omit the first crosscap addition, we obtain embeddings
$\nQ(n+4,t)$ for
 $t=2n+3, 2n-1, 2n-5, 2n-7, 2n-11, 2n-15, \dots, 7, 3$.
 Combining these, we have $\nQ(n+4,t)$ for $t = 2n+3, 2n-1, 2n-3, 2n-5,
\dots, n, \dots, 3, 1$, i.e., for all odd $t$ with $1 \le t \le 2n+3$
except $t = 2n+1$.  Since $n \ge 7$, this includes all $t \in
\nT_{n+4}$.

 If we perform all operations we obtain an embedding $\Phi_{n+4}$ of
$K_{n+4}-E_1$ where $E_1 = \{v_2 v_{n+3}\}$.
 If we perform all handle additions and the first crosscap addition but
omit the second crosscap addition, we obtain an embedding $\Phi^-_{n+4}$
of $K_{n+4}-E_3$, i.e., a $\nQ(n+4, 3)$.
 Observe the following.
 (a\pr) $E_3 = \{v_2 v_{n+3}, v_2 v_{n+4}, v_{n+1}
v_{n+2}\}$ ($E_1$ and the edges of the omitted crosscap addition).
 (b\pr) We see that $\Phi^-_{n+4}$ has a full diagonal set
 \begin{align*}
  \sD_{n+4} &= \{
	\di(v_1, v_3), \di(v_2, v_3),
	\di(y_1, y_2), \di(y_3, y_4), \dots, \di(y_{n-8}, y_{n-7}),
	\di(v_{n-3}, v_{n-1}), \di(v_{n-2}, v_n),
	\\ &\qquad
	\diq(v_{n+1}, v_{n+3}, v_{n+1} v_{n-1} v_{n+3} v_{n-3}),
	\diq(v_{n+2}, v_{n+4}, v_{n+2} v_n v_{n+4} v_{n-2})
 \}.
 \end{align*}
 (c\pr) There is a reserved square $v_1 v_{n+4} x_5 v_{n+3}$, or
equivalently $v_1 v_{n+3} x_5 v_{n+4}$, that is
not an underlying square for $\sD_{n+4}$.  Conditions (a\pr), (b\pr),
(c\pr) are almost what we need to say that $\Phi^-_{n+4}$ has Property
P.  Condition (b\pr) is correct but (a\pr) has $v_2$ where it should
have $v_1$, and (c\pr) has $v_1$ where it should have $v_2$.  However,
renaming $v_1$ as $v_2$ and $v_2$ as $v_1$ does not affect (b\pr), and
puts (a\pr) and (c\pr) into the correct form for Property P.  Thus,
after this renaming we have a $\nQ(n+4,3)$ with Property P.

 This completes the proof of the induction step.  Now the basis and the
induction step together imply Lemma \ref{nonoriodd}.
 \end{proof}

 Theorem \ref{construction} now follows from Lemmas \ref{orieven},
\ref{oriodd}, \ref{nonorismall}, \ref{nonorieven} and \ref{nonoriodd}.

 \section{Conclusion}\label{final}

 Face-simple quadrangulations are of interest because they are somewhere
in between closed $2$-cell embeddings and polyhedral embeddings.
 An embedding is \emph{closed $2$-cell} if every face is bounded by a
cycle, so that a face does not `self-touch' (equivalently,
a $2$-representative embedding of a $2$-connected graph), and
 \emph{polyhedral} if it is also true that two distinct faces touch at
most once, meaning that the intersection of their boundaries is empty, a
single vertex, or a single edge (equivalently, a $3$-representative
embedding of a $3$-connected graph).
 Every quadrangulation is closed $2$-cell by definition, but the
following lemma shows that minimal
quadrangulations cannot be polyhedral.

 \begin{lemma}
 If $\Phi$ is a quadrangular embedding of an $n$-vertex $m$-edge graph
with $m > \frac12 \binom{n}2$ then $\Phi$ is not polyhedral.
 \end{lemma}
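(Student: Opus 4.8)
The plan is to argue by contradiction using a counting argument on the diagonals of the faces. Suppose $\Phi$ is polyhedral. Since $\Phi$ is quadrangular, counting sides of edges gives $r = \frac12 m$ faces, and each face, being a $4$-cycle $abcd$, contributes the two diagonal pairs $\{a,c\}$ and $\{b,d\}$ of opposite vertices. Thus there are exactly $2r = m$ diagonals counted over all faces. I would show that these $m$ diagonals are $m$ \emph{distinct} vertex pairs, none of which is an edge of the graph. Since the number of non-edges is $\binom{n}{2}-m$, this forces $m \le \binom{n}{2}-m$, i.e.\ $2m \le \binom{n}{2}$, contradicting the hypothesis $m > \frac12\binom{n}{2}$.

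The heart of the argument is the following consequence of polyhedrality: the boundaries of two distinct faces meet in at most a single vertex or a single edge. I would apply this twice. First, if some pair $\{a,c\}$ were a diagonal of two distinct faces $F$ and $F'$, then each of $F,F'$ would have both $a$ and $c$ on its boundary with $a,c$ non-adjacent along that boundary; hence the intersection of $\partial F$ and $\partial F'$ would contain the two distinct vertices $a$ and $c$, which is neither empty, a single vertex, nor (since neither boundary contains the edge $ac$) a single edge~--- a contradiction. So all $m$ diagonals are distinct pairs. Second, suppose a diagonal $\{a,c\}$ of a face $F=abcd$ were also an edge of the graph; then $ac$ borders two faces, each distinct from $F$ (as $ac$ does not lie on $\partial F$), and any such face shares with $F$ the two vertices $a,c$ that are non-adjacent on $\partial F$, again violating the face-intersection condition. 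Hence no diagonal is an edge.

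Combining the two claims, the $m$ diagonal pairs inject into the set of non-edges of the graph, giving $m \le \binom{n}{2}-m$ and the desired contradiction.

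The step requiring the most care is the precise justification that in each case the intersection of the two face boundaries ``is at least two isolated vertices and not a single edge''; in particular one must rule out the degenerate possibility that the shared boundary is a single edge with endpoints $a$ and $c$. This is handled by observing that on $\partial F = abcd$ the vertices $a$ and $c$ are separated by $b$ (and by $d$), so the edge $ac$ never lies on $\partial F$, and therefore the intersection cannot be a single edge. I would also remark that, since a quadrangular embedding is automatically closed $2$-cell, the only part of the polyhedral hypothesis doing real work here is the face-intersection condition.
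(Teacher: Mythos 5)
Your proposal is correct and is essentially the paper's argument in contrapositive form: the paper counts the $m$ edges together with the $m$ diagonals (as a multiset) and notes that $2m > \binom{n}{2}$ forces, by pigeonhole, either a repeated diagonal or a diagonal coinciding with an edge, which is exactly the dichotomy you analyze; your injection of diagonals into non-edges yielding $m \le \binom{n}{2} - m$ is the same count. Your added care in verifying that each case violates the face-intersection condition (in particular ruling out the intersection being the single edge $ac$) just makes explicit what the paper compresses into its final sentence.
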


 \begin{proof} Let $E$ be the edge set of the underlying (simple) graph
of $\Phi$, where each edge is considered as a vertex pair, and let $D$
be the multiset of diagonals of squares of $\Phi$, i.e., all vertex
pairs $\{u,v\}$ that occur as diagonals, counted by the number of
squares in which each diagonal occurs.  Then $|E|=|D|=m$
and since $|E|+|D| > \binom{n}2$ there is some pair that either occurs
twice in $D$, or occurs once in $E$ and once in $D$.  This means that
there are two faces that touch more than once.
 \end{proof}

 Therefore, it is natural to consider a weakening of polyhedral
to closed $2$-cell and face-simple, where two faces
can touch more than once, but not along two edges.
 In the orientable case, minimal quadrangulations of $S_g$, $g \ge 1$,
are automatically face-simple by Observation \ref{fsquad}, giving
Theorem \ref{face-simple}.  However, in general our nonorientable
minimal quadrangulations are not face-simple, since we often use
crosscap additions, which create two faces that touch
along two edges.

 \begin{question}
 Does $N_q$ have a minimal quadrangulation that is also face-simple, so
that $n'(N_q) = n(N_q)$, for all but a few small values of $q$?
 \end{question}

 We think that the answer is probably `yes.'  It should be possible to prove this by adapting the techniques in this
paper.  However, even if we avoid crosscap additions, some care is
needed.
 A handle addition of Type I (or a disc addition followed by a suitable
handle addition of Type I) preserves face-simplicity, but for
non\-ori\-ent\-able embeddings handle additions of Types II, III and IV
may create squares that touch along two edges.

 %
 %
 %
 %
 %

 \end{document}